\DeclareMathOperator{\tb}{tb}
\DeclareMathOperator{\rot}{rot}
\DeclareMathOperator{\lk}{lk}
\DeclareMathOperator{\sign}{sign}
\DeclareMathOperator{\selfl}{sl}
\DeclareMathOperator{\divergence}{div}
\newcommand{\R}{\mathbb{R}}
\newcommand{\Z}{\mathbb{Z}}
\newcommand{\N}{\mathbb{N}}
\newcommand{\xist}{\xi_{\mathrm{st}}}
\newtheoremstyle{thm}{}{}{\itshape}{}{\bfseries}{\hfill\\}{ }{} %Thereom style
\newtheoremstyle{definition}{}{}{}{}{\bfseries}{\hfill\\}{ }{} %Definition style
\theoremstyle{thm}
\newtheorem{Theorem}{Theorem}[section]
\newtheorem{theorem}[Theorem]{Theorem}
\newtheorem{lemma}[Theorem]{Lemma}
\newtheorem{proposition}[Theorem]{Proposition}
\newtheorem{corollary}[Theorem]{Corollary}
\theoremstyle{definition}
\newtheorem{remark}[Theorem]{Remark}
\newtheorem{example}[Theorem]{Example}
\newtheorem{algorithm}[Theorem]{Algorithm}
\begin{document}

%%%%%%%%%%%%%%%%%%%%%%%%%%%%% Titel und Author %%%%%%%%%%%%%%%%%%%%%%%%%%%%%%%%%%%%

\title{Computing rotation numbers in open books} %Titel auf Seitenüberschriften darf nicht zu lang sein.

\author{Sebastian Durst}

\address{Mathematisches Institut, Universit\"at zu K\"oln,
Weyertal 86--90, 50931 K\"oln}
\email{sdurst@math.uni-koeln.de}

\author{Marc Kegel}

\address{Mathematisches Institut, Universit\"at zu K\"oln,
Weyertal 86--90, 50931 K\"oln
\newline\url{http://www.mi.uni-koeln.de/~mkegel/} }
\email{mkegel@math.uni-koeln.de}
%%%%%%%%%%%%%%%%%%%%%%%%%%%%% Abstract %%%%%%%%%%%%%%%%%%%%%%%%%%%%%%%%%%%%

\begin{abstract}
We give explicit formulas and algorithms for the computation of the rotation number of a nullhomologous Legendrian knot on a page of a contact open book. On the way, we derive new formulas for the computation of the Thurston--Bennequin invariant of such knots and the Euler class and the $d_3$-invariant of the underlying contact structure. 
\end{abstract}

\date{\today} % Datum wird untern auf erster Seite angezeigt

\keywords{Legendrian knots, rotation number, open books} %Stichworte werden unten auf erster Seite angezeigt

\subjclass[2010]{57R17; 53D35, 53D10, 57M27} % Mathematical subject classification
%zum Beispiel:
%53D10   	Contact manifolds, general
%53D35   	Global theory of symplectic and contact manifolds
%57M10   	Covering spaces
%57M12   	Special coverings, e.g. branched
%57M25   	Knots and links in $S^3$ {For higher dimensions, see 57Q45}
%57M27   	Invariants of knots and 3-manifolds
%57M50   	Geometric structures on low-dimensional manifolds
%57N10   	Topology of general $3$-manifolds [See also 57Mxx]
%57N13   	Topology of $E^4$, $4$-manifolds
%57N15   	Topology of $E^n$, $n$-manifolds ($4 \less n \less \infty$)
%57N16   	Geometric structures on manifolds
%57N40   	Neighborhoods of submanifolds
%57R15   	Specialized structures on manifolds (spin manifolds, framed manifolds, etc.)
%57R17   	Symplectic and contact topology
%57R18   	Topology and geometry of orbifolds
%57R30   	Foliations; geometric theory
%57R65   	Surgery and handlebodies

\maketitle

%%%%%%%%%%%%%%%%%%%%%%%%%%%%%%%%%%%%%%%%%%%%%%%%%%%%%%%%%%%%%%%%%%%%%%%%%%%%%%%%
%%%%%%%%%%%%%%%%%%%%%%%%%%%%%%%%%%%%%%%%%%%%%%%%%%%%%%%%%%%%%%%%%%%%%%%%%%%%%%%%

\section{Introduction}

The \textit{classical invariants}, the \textit{Thurston--Bennequin invariant} $\tb$ and the \textit{rotation number} $\rot$, are the two most
fundamental invariants of nullhomologous Legendrian knots in contact 3-manifolds. They carry a lot of information about the contact structure (for example the contact structure is overtwisted if and only if there exists a Legendrian unknot with $\tb=0$, see~\cite{El89}) and the topological knot type (for example the classical invariants give obstructions to sliceness of a knot in $S^3$, see~\cite{Ru95}).

According to Giroux, there is a deep connection between contact manifolds and open books (cf.~\cite{Etnyre2004}). In particular, to every open book presenting a $3$-manifold there exists an (up to isotopy) unique contact structure on this $3$-manifold with contact planes arbitrarily close to the pages of the open book outside a neighbourhood of the binding.
Throughout the paper, we use the expression \emph{contact} open book to emphasize that we are in fact considering the contact $3$-manifold associated with the abstract open book in this way.

In this paper we continue in the spirit of~\cite{tb_openbooks} and consider Legendrian knots sitting on the
page of a contact open book. In~\cite{tb_openbooks} we explained how to check if such a knot is nullhomologous and if so,
how to compute its Thurston--Bennequin invariant. Here we concentrate on the second classical invariant, the rotation number, and give a formula how to compute it.

%\begin{result}[{see Theorem~\ref{thm:rot_in_ob}}]
%Let $K$ be a knot sitting on the page of an open book with
%monodromy encoded by a concatenation of Dehn twists along non-isolating
%curves.
%Then the intersection behaviour of the knot and the Dehn twist curves
%with an arc basis
%allows us to
%\begin{itemize}
%\item[(a)] decide whether $K$ is (rationally) nullhomologous,
%\item[(b1)] compute the (rational) Thurston--Bennequin invariant of $K$ if
%$K$ is (rationally) nullhomologous,
%\item[(b2)] compute the (rational) rotation number of $K$
%if $K$ is (rationally) nullhomologous,
%\item[(b3)] compute the (rational) self-linking number of a transverse push-off of $K$
%if $K$ is (rationally) nullhomologous,
%\item[(c)] compute the Poincar\'e dual of the Euler class of the contact structure,
%\item[(d)] decide if the Euler class of the contact structure is torsion and if so,
%compute its $d_3$-invariant
%\end{itemize}
%\end{result}

\begin{theorem}
\label{thm:rot_in_ob}
Let $K$ be a Legendrian knot sitting on the page of a contact open book $(\Sigma,\phi)$ with
monodromy $\phi$ given as a concatenation of Dehn twists along non-isolating
curves.
Then there exists an arc basis of $\Sigma$ such that
the intersection behaviour of $K$ and the Dehn twist curves
with the arcs give criteria and formulas to
\begin{itemize}
\item[(a)] decide whether $K$ is (rationally) nullhomologous,
\item[(b1)] compute the (rational) Thurston--Bennequin invariant of $K$
if $K$ is (rationally) nullhomologous,
\item[(b2)] compute the (rational) rotation number of $K$ if
$K$ is (rationally) nullhomologous,
\item[(b3)] compute the (rational) self-linking number of a transverse push-off of $K$
if $K$ is (rationally) nullhomologous,
\item[(c)] compute the Poincar\'e dual of the Euler class of the contact structure,
\item[(d)] decide whether the Euler class of the contact structure is torsion and if so,
compute its $d_3$-invariant
\end{itemize}
(see Algorithm~\ref{algo_everything}).
\end{theorem}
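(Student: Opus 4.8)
The backbone is the arc-basis machinery of \cite{tb_openbooks}. Fix a collection of pairwise disjoint, properly embedded arcs $b_1,\dots,b_n$ in $\Sigma$ which cut $\Sigma$ into a single disc $\Sigma_0$; the classes $[b_1],\dots,[b_n]$ form a basis of $H_1(\Sigma,\partial\Sigma)$ with Poincar\'e--Lefschetz dual basis $[a_1],\dots,[a_n]$ of $H_1(\Sigma)$ represented by closed curves. Writing $\phi=\tau_{c_1}^{\varepsilon_1}\circ\cdots\circ\tau_{c_m}^{\varepsilon_m}$, the non-isolating hypothesis on the curves $c_i$ is exactly what lets us isotope them so that each meets $\bigcup_j b_j$ transversally and efficiently while keeping $\Sigma_0$ a disc; after this normalisation every quantity below will be expressed through the signed intersection numbers $K\cdot b_j$, $c_i\cdot b_j$, $c_i\cdot c_j$ and the signs $\varepsilon_i$. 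We also recall from \cite{tb_openbooks} that, for $M:=\mathrm{OB}(\Sigma,\phi)$, the image in $H_1(M)$ of the class of a multicurve $\gamma$ lying on a page is determined by $(\gamma\cdot b_1,\dots,\gamma\cdot b_n)$ modulo the image of $\phi_*-\mathrm{id}$ on $H_1(\Sigma)$; solving this linear system over $\Z$ (resp.\ $\Q$) is criterion~(a), and propagating the page framing through the same computation yields the $\tb$-formula~(b1).

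For the rotation number~(b2) we compute a relative Euler number along a rational Seifert surface. Let $r\geq 1$ be the order of $[K]$ in $H_1(M)$ and let $F\subset M$ be a surface with $\partial F=rK$, so that $\rot(K)=\tfrac1r\langle e(\xi,s),[F]\rangle$, where $s$ is the tangent field of $K$ viewed inside $\xi|_{\partial F}$. To evaluate the right-hand side we manufacture an almost-global section of $\xi$: pick a nowhere-zero vector field $v$ on $\Sigma$ (available since $\partial\Sigma\neq\emptyset$) agreeing with the radial direction near the binding $B=\partial\Sigma$, and push it fibrewise into $\xi$ using that the contact planes are $C^0$-close to the pages away from $B$. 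This yields a nowhere-zero section $\widetilde v$ of $\xi$ over the complement of a neighbourhood $\nu B$ of the binding together with the union $D$ of the supporting annuli of the Dehn twists, placed on one distinguished page $P_0$; the sole failure of globality is that the sections glued along $P_0$ are the push-ins of $v$ and of $\phi_*v$, and these disagree only over $D$. Along $K$ the section $\widetilde v$ projects to $v$ and $s$ is tangent to the page, so the winding of $s$ relative to $\widetilde v$ inside $\xi$ along $K$ equals the winding of $TK$ relative to $v$ inside $\Sigma$ along $K$ --- a purely two-dimensional count, determined by the turning of $K$ in the disc $\Sigma_0$ and the behaviour of $v$ across the arcs $b_j$, hence by the cyclic sequence of crossings $K\cap\bigcup_j b_j$. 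Thus $\langle e(\xi,s),[F]\rangle=r\,w+N$, where $w$ is this two-dimensional count and $N=(\text{obstruction cycle of }\widetilde v)\cdot F$ is an algebraic count of the points of $F\cap D$ and $F\cap\nu B$; since $[F]$, and with it $N$, is pinned down by $[K]$, which we already know in terms of the arc intersections, $N$ rewrites as an explicit expression in the $c_i\cdot b_j$, $c_i\cdot c_j$, $K\cdot b_j$ and $\varepsilon_i$, and $\rot(K)=w+N/r$ is the asserted formula.

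Item~(b3) follows at once, since $\selfl(T_\pm K)=\tb(K)\mp\rot(K)$ for the transverse push-offs. For the Euler class~(c) we observe that the obstruction $1$-cycle of the section $\widetilde v$ above is already a representative of the Poincar\'e dual of $e(\xi)$: near each binding component it is a computable combination of a meridian and a longitude recording the defect between the page framing and a framing of $\xi$ in the standard model near $B$ (the same local data used for $\tb$), and near each $c_i$ it is homologous to $\varepsilon_i[c_i]$ weighted by the winding of $v$ across the twisting annulus; expanding this class in the basis $[a_1],\dots,[a_n]$ gives the formula, and $e(\xi)$ is torsion exactly when this class vanishes rationally, i.e.\ when a linear system with the same matrix as in~(a) is solvable. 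Finally, for the $d_3$-invariant~(d), suppose $e(\xi)$ is torsion and present $M$ as the boundary of the (achiral) Lefschetz fibration $X\to D^2$ obtained from $\Sigma\times D^2$ by attaching a $2$-handle along a Legendrian realisation of each $c_i$ in a page with framing equal to the page framing minus $\varepsilon_i$. Then $X$ carries an almost-complex structure, $\chi(X)$ and $\sigma(X)$ are read off from the linking matrix whose entries are the $c_i\cdot c_j$ and the page self-framings (again expressed in arc intersections), the pairing of $c_1(X)$ with the $2$-handle generators is given by the rotation numbers of the $c_i$ from~(b2) together with the Euler-class data from~(c), and the formula $d_3(\xi)=\tfrac14\big(c_1(X)^2-2\chi(X)-3\sigma(X)\big)$, corrected by the number of negative $\varepsilon_i$ in the achiral case, finishes the computation.

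\textbf{The main obstacle.} The delicate part is fixing the two correction terms feeding (b2) and (c): the contribution of a neighbourhood of the binding, where the ``contact planes close to the pages'' picture fails and one must compare the page framing with a framing of $\xi$ in the standard contact model near $B$ (this is also the source of the rational denominators), and the contribution of each Dehn-twist annulus, where one must control how the monodromy drags the trivialising field $v$ and verify that the outcome depends only on $[c_i]$, $\varepsilon_i$ and the numbers $c_i\cdot c_j$, not on the choice of $v$ or on the word representing $\phi$. Once these local computations are pinned down, (b3) is algebra, (c) is reading off the obstruction cycle, and (d) is handle-calculus bookkeeping plus the known $d_3$-formula; the remaining work is the routine task of re-expressing all quantities in the fixed arc basis and checking that the solvability conditions agree with those of \cite{tb_openbooks} underlying~(a).
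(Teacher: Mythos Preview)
Your strategy is genuinely different from the paper's and is closer in spirit to the Etnyre--Ozbagci method cited in the introduction. The paper does \emph{not} build an almost-global section of $\xi$ and compute obstruction cycles intrinsically on the open book. Instead it (i) fixes a very specific arc basis with a prescribed cyclic order along $\partial\Sigma$ (the ordering matters; an arbitrary arc basis gives wrong answers, cf.\ Remark~\ref{rem:special_arcs}); (ii) uses Avdek's algorithm to embed the page $\Sigma$, equipped with a standard monodromy $\phi_{S^3}$, into $(S^3,\xist)$ as a Legendrian ribbon and computes $\rot$ there by counting vertical tangencies in the front projection (Proposition~\ref{prop:special_s3}); (iii) realises the passage from $(\Sigma,\phi_{S^3})$ to $(\Sigma,\phi)$ as a contact surgery along copies of the $\alpha_i$, $\beta_i$ and the $T_j$ on parallel pages, with all linking numbers read off from arc-intersection data; (iv) feeds this surgery link into the ready-made formulas of \cite{rot_surgery,Kegel2016} for $\tb$, $\rot$, $\operatorname{PD}(e(\xi))$ and $d_3$. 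In particular the paper never confronts the binding correction or the Dehn-twist-annulus contribution you single out as the main obstacle: those difficulties are absorbed into the already-proven surgery formulas.

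Your outline is viable in principle and would give a more self-contained, geometrically intrinsic proof, but as written it is not yet a proof: you explicitly defer the two local computations that carry all the content, and you have not pinned down the vector field $v$ precisely enough for the two-dimensional winding $w$ to become a well-defined function of the arc-crossing word alone (different choices of $v$ shift $w$ by terms that must cancel against $N$, and checking that cancellation \emph{is} the computation). Note also that for (d) the rotation numbers entering $c_1(X)^2$ are those of the $c_i$ in $\partial(\Sigma\times D^2)\cong\#(S^1\times S^2)$, not their rotation numbers in $(M,\xi)$ produced by your (b2), so that step needs a separate input. The trade-off is clear: the paper's route is modular and outsources the hard analysis to \cite{Avdek2013} and \cite{rot_surgery}; your route would avoid importing that machinery at the cost of redoing the local models from scratch.
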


Readers only interested in using the resulting formulas can proceed to
Algorithm~\ref{algo_everything} and the following discussion in
Section~\ref{section:algorithm}.

Several results in this direction have been obtained earlier.
Etnyre and Ozbagci~\cite{EtOz08} gave a formula to compute the Euler class and the $d_3$-invariant of a contact open book, which in many cases can be
easier to compute than the one given in this paper. In order to do so, they also developed a method to calculate the rotation number of
a Legendrian knot on the page of an open book, see also the explicit calculations in~\cite{LiWa12}.
In~\cite{tb_openbooks} a method for checking if a Legendrian knot
sitting on a page is nullhomologous is developed, and if so, a formula for its
Thurston--Bennequin invariant is provided.
On the other hand, Gay and Licata~\cite{GaLi15} studied Legendrian knots in open
book which in general are not contained in a page by a generalisation of the
front projection, where it is possible to compute $\tb$ as well.

Throughout this paper, all homology groups are understood to be integral unless
indicated otherwise. We will also, by abuse of notation, use the same symbol for
a curve, the homology class and the positive Dehn twist it represents.

We will first generalise an example of~\cite{MR2557137} to compute the rotation number
of a Legendrian knot sitting on the page of a specific planar open book of
$(S^3,\xist)$.
Afterwards we use the method of~\cite{Avdek2013} to find an embedding of a more general
non-planar  abstract open book into $(S^3,\xist)$ and give formulas for computing the rotation number in these cases.

For the general case, we first use Avdek's algorithm~\cite{Avdek2013} for transforming a contact open book into a contact surgery diagram along a Legendrian link
and then compute the invariants from the resulting contact surgery diagram
via~\cite{rot_surgery}, which builds on~\cite{MR2557137,GeOn15,Conway2014,Kegel2016}.

We begin with an application of our results on the binding number of Legendrian knots, which we propose to study in analogy to the binding number of a contact
manifold as introduced in~\cite{EtOz08}.

%%%%%%%%%%%%%%%%%%%%%%%%%%%%%%%%%%%%%%%%%%%%%%%%%%%%%%%%%%%%%%%%%%%%%%%%%%%%%%%%
\subsection*{Application to the binding number of Legendrian knots}\hfill
\label{section:application_binding number}

Let $K$ be a Legendrian knot in a contact $3$-manifold $(M,\xi)$. Then the support genus $\textrm{sg}(K)$
is defined to be the minimal genus of the page of a contact open book decomposition of $(M,\xi)$
in which $K$ is contained in a single page, i.e.\
$$
\textrm{sg}(K) = \min\big\{ g(\Sigma) \,\mid\, K\subset\Sigma \big\},
$$
where $g(\Sigma)$ is the genus of the surface $\Sigma$ (see~\cite{Ona2010}).

In analogy to the binding number of a contact manifold as introduced by~\cite{EtOz08},
we propose to define
the binding number $\textrm{bn}$ of $K$ to be the minimal number of boundary components
of the pages of contact open book decompositions with minimal genus containing $K$ in a page, i.e.\
$$
\textrm{bn}(K) := \min\big\{ |\partial\Sigma|\colon\thinspace K\subset\Sigma \textrm{ with }g(\Sigma)
=sg(K)\big\}.
$$

\begin{corollary}
Let $K$ be a Legendrian knot with non-vanishing rotation number and support genus
$\textrm{sg}(K) = 1$ in an arbitrary contact $3$-manifold.
Then the binding number of $K$ is at least two.
\end{corollary}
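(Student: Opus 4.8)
The plan is to argue by contradiction from Theorem~\ref{thm:rot_in_ob}. Assume $\textrm{bn}(K)=1$. Then, since $\textrm{sg}(K)=1$, the knot $K$ lies on a page $\Sigma$ of some contact open book $(\Sigma,\phi)$ supporting the ambient manifold with $g(\Sigma)=1$ and $\partial\Sigma$ connected, i.e.\ $\Sigma$ is the once-punctured torus. It then suffices to show that this forces $\rot(K)=0$, contradicting the hypothesis and hence proving $\textrm{bn}(K)\geq 2$.

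Two preliminary reductions. First, Theorem~\ref{thm:rot_in_ob} is applicable here: the mapping class group of the once-punctured torus fixing the boundary is generated by the Dehn twists along two non-separating simple closed curves, each of which has connected complement meeting $\partial\Sigma$ and is therefore non-isolating; so $\phi$ is a concatenation of Dehn twists along non-isolating curves, and Algorithm~\ref{algo_everything} provides an arc basis and an explicit value for $\rot(K)$. Second, one may assume $K$ is non-separating on $\Sigma$. Indeed, if $[K]=0$ in $H_1(\Sigma)$ then $K$ is either nullhomotopic --- in which case it bounds a disc in the ambient manifold, is the topological unknot, and so has support genus $0$, contradicting $\textrm{sg}(K)=1$ --- or boundary-parallel, and this last case I would settle by a direct computation from the formulas of Section~\ref{section:algorithm}.

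So let $K$ be non-separating, representing a primitive class in $H_1(\Sigma)$. The geometric reason I would give for $\rot(K)=0$ is that the tangent bundle of the once-punctured torus is trivial: choosing a global trivialisation $\tau$ of $T\Sigma$ adapted to a flat structure coming from the torus, and an arc basis dual to a corresponding symplectic basis of $H_1(\Sigma)$, the knot $K$ is isotopic on the page to a closed geodesic and so has vanishing winding relative to $\tau$; the Dehn twists constituting $\phi$ are each supported in an annulus and hence do not change the rel-boundary homotopy class of $\tau$; and the only remaining contributions to the rotation-number formula come from the single binding component and cancel. Feeding this into Algorithm~\ref{algo_everything} should yield $\rot(K)=0$.

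The step I expect to be the main obstacle is making that last sentence precise, i.e.\ verifying that the combinatorial expression of Theorem~\ref{thm:rot_in_ob}, evaluated on an arbitrary genus-one, one-boundary-component page, really collapses to $0$. The delicate case is $[K]\neq 0$ in $H_1(\Sigma)$, where the (rational) Seifert surface realising the nullhomology of $K$ must be assembled from several sub-pages glued along arcs by $\phi$, so that it meets the binding and the monodromy genuinely enters the bookkeeping; one has to show that for the once-punctured torus, and with the symmetric choice of arc basis above, the contributions of the two basis curves together with the binding term add up to zero. I would carry this out by a direct calculation with the explicit formulas in Section~\ref{section:algorithm}.
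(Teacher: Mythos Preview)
Your approach is the same as the paper's: assume $\textrm{bn}(K)=1$, put $K$ on a once-punctured torus page, and read off $\rot(K)=0$ from the formulas of Theorem~\ref{thm:rot_in_ob}. The paper's proof is one sentence because the computation is in fact immediate; you are overcomplicating two points.

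First, the case distinction on separating versus non-separating $K$ is unnecessary, and your reasoning for the separating cases is off. A Legendrian curve on the page is non-isolating (this is the Lemma in Section~2), and on a once-punctured torus every separating simple closed curve is isolating, since one complementary component misses $\partial\Sigma$. So $K$ is automatically non-separating; the nullhomotopic and boundary-parallel cases simply do not occur, and there is nothing to ``settle by direct computation''. The same remark applies to every Dehn twist curve $T_j$ in the monodromy factorisation.

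Second, what you flag as the main obstacle is not one. With $g=1$ and $h=0$ there are no reducing arcs ($g+h-1=0$), so the word associated to any curve on the page has no index changes, and the only possible contributions to $\lambda_+,\rho_+$ in Algorithm~\ref{algo:c_from_word} come from adjacent pairs $\beta_1^{-1}\alpha_1^{-1}$ and $\alpha_1^{-1}\beta_1^{-1}$. A non-separating simple closed curve on the once-punctured torus in minimal position with the arcs is a $(p,q)$-curve whose word uses only one sign of $\alpha_1$ and one sign of $\beta_1$; in each of the four sign combinations one sees $\lambda_+=\rho_+$. Hence $r(K)=0$ and $r(T_j)=0$ for every $j$, so the formula in Algorithm~\ref{algo_everything}(b2) collapses to $\rot(K)=r(K)-\sum_j a_{2g+h+j}n_j\,r(T_j)=0$ without ever solving $Q\mathbf{a}=\mathbf{l}$ or assembling any Seifert surface. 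This is exactly the observation in the proof of Proposition~\ref{prop:special_s3} that ``a knot not intersecting any of the reducing arcs has vanishing rotation number''.
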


Note that this also becomes evident from the proof of Lemma~6.1 in~\cite{EtOz08}.

\begin{proof}\hfill\\
Suppose that $K$ has support genus and binding number both equal to one,
then one can easily check using Theorem~\ref{thm:rot_in_ob} or via the explicit formulas
given in Algorithm~\ref{algo_everything} that the rotation number of $K$ vanishes.
\end{proof}

\begin{example}\label{ex:nonempty}
It is known that all Legendrian realizations of torus knots $T_{2,2n+1}$, $n\in\N$, with Thurston--Bennequin invariant at least one and non-vanishing rotation number have support genus equal to one (see Theorem~1.3 in\cite{LiWa12}) and thus binding number at least two.
	\end{example}

%%%%%%%%%%%%%%%%%%%%%%%%%%%%%%%%%%%%%%%%%%%%%%%%%%%%%%%%%%%%%%%%%%%%%%%%%%%%%%%%
\section{Background: Legendrian curves on open books}
For the basics in low-dimensional contact topology we refer the reader for example to~\cite{Etnyre2004,OzSt04,Geiges2008}.
Nevertheless, we will briefly recall some well-known facts about which curves sitting on the page of
a contact open book represent Legendrian knots.

Let $L$ be a simple closed curve on a convex surface $S$.
We call $L$ \textbf{non-isolating} if every component of $S\setminus L$
has non-empty intersection with the dividing set $\Gamma$ of~$S$.

\begin{lemma}
A simple closed curve $L$ on $S$ represents a Legendrian knot (i.e.\ can be realised as a Legendrian knot by a small perturbation of $S$ through convex surfaces) if and only
if $L$ is non-isolating.
\end{lemma}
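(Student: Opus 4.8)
The plan is to recognise this as the Legendrian realisation principle for a single closed curve and to deduce it from two facts of Giroux: his flexibility theorem for convex surfaces, and his characterisation of which singular foliations are carried by a convex surface with prescribed dividing set. Recall that a disjoint union of embedded circles $\Gamma$ transverse to a singular foliation $\mathcal F$ on $S$ \emph{divides} $\mathcal F$ if, writing $S\setminus\Gamma=R_+\sqcup R_-$, there are an area form $\omega$ on $S$ and a vector field $X$ directing $\mathcal F$ with $\divergence_\omega X>0$ on $R_+$, $\divergence_\omega X<0$ on $R_-$, and $X$ transverse to $\Gamma$ pointing out of $R_+$; convexity of $S$ is equivalent to $\mathcal F(S)$ admitting such a dividing set $\Gamma$, and Giroux flexibility says any other singular foliation $\mathcal F'$ on $S$ divided by $\Gamma$ is the characteristic foliation of a $C^0$-small perturbation $S'$ of $S$ through convex surfaces, with dividing set isotopic to $\Gamma$; in particular any closed leaf of $\mathcal F'$ is a Legendrian knot on $S'$. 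So the task reduces to matching the condition ``$L$ non-isolating'' with the existence of such an $\mathcal F'$ having $L$ as a closed leaf.

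For the ``only if'' direction I would argue directly (this is the easy half). Suppose $L$ has been realised as a Legendrian knot on a convex surface $S$ with data $(\Gamma,\omega,X)$ as above. Since $L\subset S$ and $TL\subset\xi$, we get $TL\subset\xi\cap TS$, so $L$ is a union of leaves and singular points of $\mathcal F(S)$ and hence invariant under the flow $\psi_t$ of $X$; therefore every connected component $R$ of $S\setminus L$ is flow-invariant and $X$ is tangent to $\partial R\subseteq L$. If some $R$ were disjoint from $\Gamma$ it would lie entirely in $R_+$ or $R_-$, say $R_+$; then $\frac{d}{dt}\big|_{t=0}\int_{\psi_t(R)}\omega=\int_R\divergence_\omega X\,\omega>0$ since $\divergence_\omega X>0$ throughout $R_+$ and $R$ has positive $\omega$-area, while the same derivative is $0$ because $\psi_t(R)=R$ for all $t$. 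This contradiction forces every component of $S\setminus L$ to meet $\Gamma$, i.e.\ $L$ is non-isolating.

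For the ``if'' direction, assume $L$ is non-isolating and build $\mathcal F'$ by hand. On an annular neighbourhood $L\times[-1,1]$ of $L=L\times\{0\}$ I would take the foliation directed by $\partial_\theta+\varepsilon(s)\partial_s$ with $\varepsilon$ vanishing only at $s=0$, so that $L$ is a closed leaf; near $\Gamma$ I would take any foliation transverse to $\Gamma$ with the required co-orientation; and on the remaining surface-with-boundary $S\setminus(N(L)\cup N(\Gamma))$ I would interpolate, choosing simultaneously a directing vector field $X$ and an area form $\omega$ so that $\divergence_\omega X$ is positive on $R_+$, negative on $R_-$, and $X$ exits $R_+$ along $\Gamma$. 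Giroux flexibility then produces a convex perturbation $S'$ of $S$ whose characteristic foliation is $\mathcal F'$, on which the closed leaf $L$ is Legendrian.

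The real work, and the step I expect to be the main obstacle, is exactly this last interpolation: filling in a Morse--Smale-type foliation with divergence of the correct sign on each component of $S\setminus(N(L)\cup N(\Gamma))$, matching the prescribed data on the boundary. The point is that this is possible precisely because no component of $S\setminus L$ lies entirely on one side of $\Gamma$ — which is the very obstruction exhibited by the flow-invariant-region computation of the ``only if'' direction — so the non-isolating hypothesis is exactly what makes the bookkeeping close up. I would carry this out following Kanda and Honda (\emph{On the classification of tight contact structures I}, Theorem~3.7), after Giroux, rather than reproduce the combinatorics in full here.
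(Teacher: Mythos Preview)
Your proposal is correct and takes essentially the same approach as the paper: both invoke Honda's Legendrian realisation principle for the ``if'' direction, and for ``only if'' both derive a contradiction from the divergence integral over an isolated component $R$. Your flow-invariance packaging (the derivative $\frac{d}{dt}\big|_{0}\int_{\psi_t(R)}\omega$ must vanish since $X$ is tangent to $\partial R$, yet equals $\int_R(\divergence_\omega X)\,\omega>0$) is precisely the Lie-derivative/Cartan formulation of the paper's Stokes computation $0<\int_{\overline{S}_0} d(i_X\Omega)=\int_L i_X\Omega=\int_L\alpha=0$.
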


\begin{proof}
A non-isolating simple closed curve always represents a Legendrian knot by the Legendrian realisation
principle (see \cite[Theorem~3.7]{Honda2000}).
So let $L$ be not non-isolating, i.e.\ there is a component
$S_0$ of $S\setminus L$ with $S_0\cap \Gamma = \emptyset$,
and assume that $L$ represents a Legendrian knot.
%Then $\tb(L) = 0$ as the contact
%framing of $L$ coincides with the Seifert framing given by $\overline{S}_0$.
Without loss of generality, we have $\divergence_\Omega(X) > 0$ on
$\overline{S}_0$, where $\Omega$ is a volume form on $S$ and $X$ the vector
field defining the characteristic foliation.
Hence,
$$
0 < \int_{\overline{S}_0}{\divergence_\Omega(X)} =
\int_{\overline{S}_0}{d(i_X\Omega)}
= \int_L{i_X\Omega} = \int_L{\alpha} = 0,
$$
where $\alpha$ denotes the contact form and the last equality holds because
L is Legendrian.
\end{proof}

Here we are interested in the special case of the page $\Sigma$ of an open book, which is convex
with $\Gamma = \partial\Sigma$.
In particular, if $\partial\Sigma$ is connected, $L$
represents a Legendrian knot if and only if $L$ is non-separating.
Note also, that for every Legendrian link in a contact manifold there exists
a compatible open book decomposition such that the link is contained in a page
(cf.\ \cite{AkbulutOz} or \cite[Corollary~4.23]{Etnyre2004}), i.e.\ our assumption of a Legendrian
knot sitting on the page of an open book is not exotic at all.

%%%%%%%%%%%%%%%%%%%%%%%%%%%%%%%%%%%%%%%%%%%%%%%%%%%%%%%%%%%%%%%%%%%%%%%%%%%%%%%%

\section{A special planar case}
We begin by discussing a method to compute the rotation number in an easy planar case
which is based on the idea presented in \cite[Lemma~4.1]{MR2557137}.

Suppose that $\Sigma$ is \emph{planar}, i.e.\ $\Sigma$ is a disc with $k$ holes
$$
\Sigma \cong D^2 \setminus \left(\bigsqcup_{i=1}^k D^2_i\right),
$$
and the monodromy is given by $\phi = \beta_k^{+1} \circ \cdots \circ \beta_1^{+1}$,
where $\beta_i^{+1}$ denotes a positive Dehn twist along a curve $\beta_i$ parallel
to the inner boundary $\partial D^2_i$.
We furthermore assume that the curves $\beta_i$ are oriented
consistently with the boundary orientation induced by $\Sigma$
(see Figure~\ref{fig:diskWithHoles}).
In particular, by destabilising the open book, we see that $(\Sigma, \phi)$
describes the standard contact $3$-sphere
$(S^3, \xi_\text{st})$ and from this it also follows that every Legendrian knot $K$ sitting on the page of this contact open book
is some Legendrian unknot.

\begin{figure}[htb] 
\centering
\def\svgwidth{0.5\columnwidth}
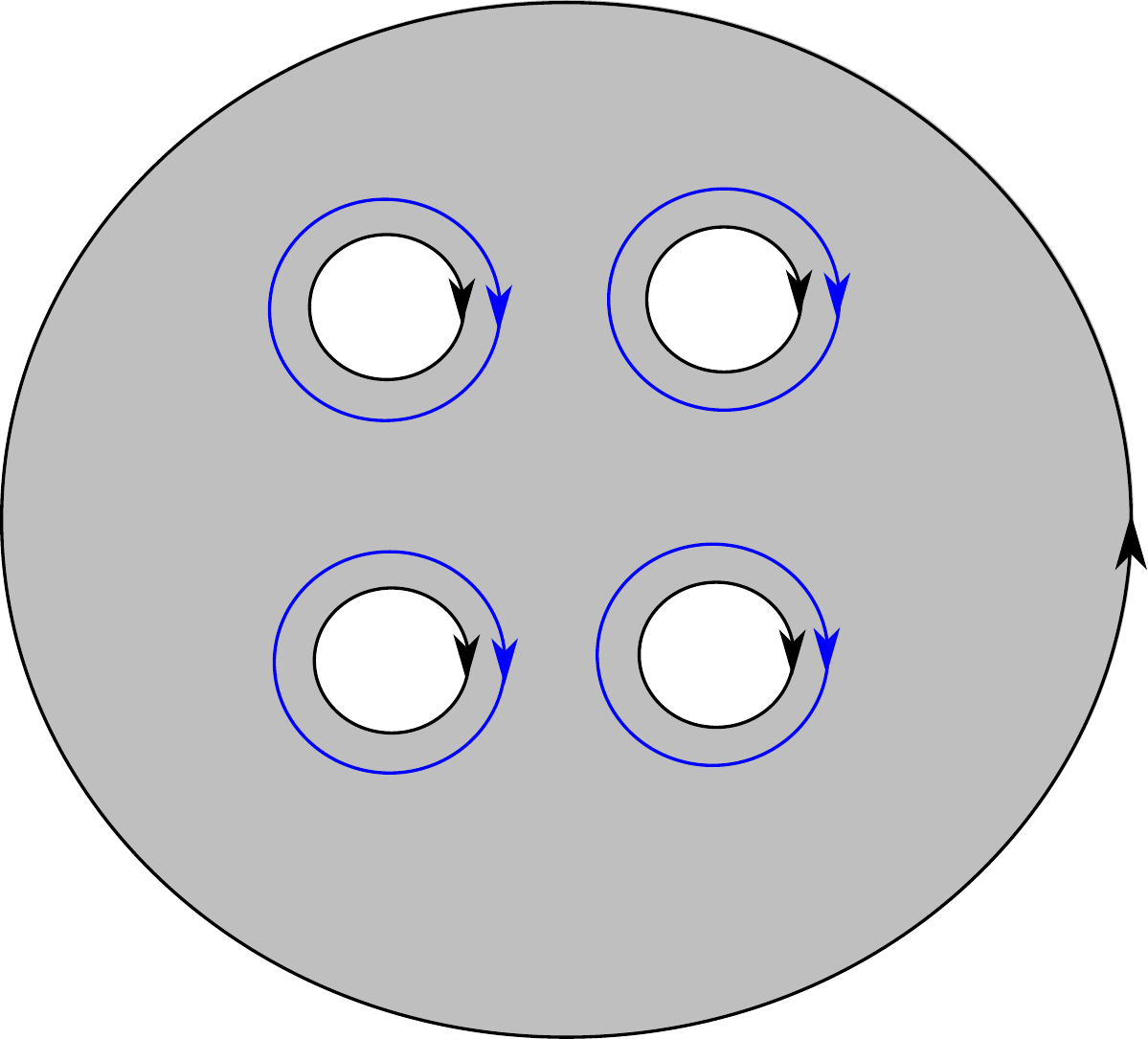
\caption{A planar open book decomposition of $(S^3,\xist)$}
\label{fig:diskWithHoles}
\end{figure}

\begin{proposition}
\label{prop:planar}
Let $K$ be a Legendrian knot sitting on the page of a planar open book
$(\Sigma, \phi)$ with $\phi$ as described above.
Then the following holds:

\begin{enumerate}
\item $K = \sum_{i=1}^{k} b_i \beta_i \in H_1(\Sigma)$ such that either all $b_i\in\{+1,0\}$ or
all $b_i\in \{-1,0\}$,
\item the rotation number of $K$ computes as
$$
\rot (K) = \sum_{i=1}^{k} b_i - \sign\left(\sum_{i=1}^{k}b_i\right).
$$
\end{enumerate}
\end{proposition}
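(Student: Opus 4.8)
The plan is to handle the two claims in turn.

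For (1): $H_1(\Sigma)$ is free abelian with basis the classes $\beta_1,\dots,\beta_k$ of the inner boundary circles (the outer boundary being homologous to $\pm\sum_i\beta_i$). Since $\Sigma\subset D^2$, the simple closed curve $K$ bounds an embedded disc $\Delta\subset D^2$; let $S\subseteq\{1,\dots,k\}$ be the set of indices $i$ with $D^2_i\subset\Delta$. Then $\Delta\cap\Sigma$ is a compact subsurface of $\Sigma$ whose oriented boundary consists of $K$ together with the circles $\partial D^2_i$, $i\in S$, taken with the opposite orientation; carrying this $2$-chain relation into $H_1(\Sigma)$ gives $[K]=\varepsilon\sum_{i\in S}\beta_i$ for a single sign $\varepsilon\in\{+1,-1\}$, according to whether $K$ bounds $\Delta\cap\Sigma$ ``from the inside'' or ``from the outside''. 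Hence $b_i=\varepsilon$ for $i\in S$ and $b_i=0$ otherwise, which is precisely (1).

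For (2), I would first reduce to a standard model. Any two simple closed curves on $\Sigma$ bounding discs in $D^2$ that enclose the same set $S$ of holes are isotopic on $\Sigma$, and such an isotopy extends to an isotopy of the open book, hence to a contactomorphism of $(S^3,\xist)$ isotopic to the identity. So $K$ is Legendrian isotopic to the ``round'' curve $K_S$ encircling exactly the holes in $S$ (with one of its two orientations). As both $\rot(\,\cdot\,)$ and the right-hand side of the claimed formula reverse sign under reversal of orientation, I may assume $b_i\in\{0,1\}$, and set $m:=|S|=\sum_i b_i$. If $m=0$ then $K$ already bounds a disc in the page, so $K$ is a standard Legendrian unknot and $\rot(K)=0=\sum_i b_i-\sign(\sum_i b_i)$; thus assume $m\ge 1$ and aim to show $\rot(K_S)=m-1$.

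The substantive step is computing $\rot(K_S)$, and for this I would follow the idea of \cite[Lemma~4.1]{MR2557137}. Destabilising the open book $k$ times identifies $(\Sigma,\phi)$ with the trivial open book $(D^2,\mathrm{id})$ of $(S^3,\xist)$, each destabilisation deleting one hole together with its boundary-parallel positive Dehn twist. Tracking $K_S$ through this process, deleting a hole not in $S$ leaves $K_S$ untouched, while deleting a hole in $S$ changes $K_S$ by a \emph{positive} stabilisation, except at the last hole of $S$, where $K_S$ is itself the Dehn-twist curve and is absorbed into the standard $\tb=-1$, $\rot=0$ unknot. Thus $K_S$ arises from the standard unknot by $m-1$ positive stabilisations, each raising the rotation number by $1$, whence $\rot(K_S)=m-1$. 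Alternatively one can display $K_S$ directly in the front projection of $(S^3,\xist)$ as the unknot with $\tb=-m$ whose front is a lens-shaped loop carrying $m-1$ positive zig-zags, or feed the open book into Avdek's algorithm \cite{Avdek2013} together with the surgery formula of \cite{rot_surgery}; all routes must agree. The main obstacle is the bookkeeping that produces the term $\sign(\sum_i b_i)$, i.e.\ verifying that exactly $m-1$ — and not $m$ — positive stabilisations occur; this is where the positivity of the Dehn twists and the special role of the ``last'' hole of $S$ enter.
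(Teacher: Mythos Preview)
Your argument is correct but follows a different route from the paper's. For part~(1), the paper simply observes that $|b_i|>1$ would force self-intersections and that the consistent boundary orientations force all nonzero $b_i$ to agree; your disc-bounding argument reaches the same conclusion more explicitly. For part~(2), the paper constructs an explicit Seifert surface for $K$ by taking the discs bounded by the nonzero $b_i\beta_i$ (these lie in the complement of the page) and connecting them with small oriented rectangular bands inside $\Sigma$. Since each $\beta_i$ has $\rot=0$, the positive tangent vector field of $K$ extends without zeros over the discs, and Poincar\'e--Hopf on each band contributes $\pm 1$ according to whether the band orientation agrees with that of the page. The signed band count is then exactly $\sum_i b_i - \sign\big(\sum_i b_i\big)$.

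Your stabilisation approach is precisely the alternative the paper records in the Remark immediately following the proposition. It is more elementary in that it avoids the Seifert-surface construction, but the paper's method has the advantage that the sign---why each band, or equivalently each stabilisation, contributes with the \emph{same} sign---is manifest from the band orientations matching or not matching the page. In your sketch the assertion that every step is a stabilisation raising $\rot$ by $+1$ (rather than lowering it) still needs a one-line verification; this, not the $m-1$ versus $m$ count, is the real content of the ``bookkeeping'' you flag. Two small remarks: the phrasing ``deleting a hole in $S$ changes $K_S$ by a positive stabilisation'' reads backwards---what you mean is that $K_S$ is a Legendrian stabilisation of the curve around $S\setminus\{i\}$---and the case $m=0$ is in fact vacuous, since a curve enclosing no holes bounds a disc in the page and is therefore isolating, hence not Legendrian realisable.
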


\begin{proof}\hfill\\
(1)
First note, that a simple closed curve cannot have $|b_i| > 1$ or it would
have self-intersections. With orientations chosen as above, one also observes
that all non-vanishing $b_i$ have to be equal.\\
(2)
By the first statement, we can glue small oriented rectangular bands connecting
the $b_i\beta_i$ with non-vanishing coefficients $b_i$ inside $\Sigma$
in such a way that the oriented boundary of the
resulting region is isotopic to $K$ in $\Sigma$
(cf.\ Figure~\ref{fig:PlanarExample}).
The orientation of these rectangles coincides with the orientation of the page
$\Sigma$ exactly if the $b_i$ are positive.

Note that the $\beta_i$ are unknots with Thurston--Bennequin invariant $-1$ and
vanishing rotation number.
Indeed, $\beta_i$ can be assumed to be parallel to a Dehn twist curve arising by
a stabilisation. These curves bound a disc in the complement and by the Dehn twist,
the Seifert framing differs by one from the contact framing given by the page.
So $\beta_i$ is a $\tb=-1$ unknot, i.e.\ the rotation number is zero.
Furthermore, a Seifert surface for $K$ is given by the union of the discs bounded
by the non-vanishing $b_i\beta_i$ (in the complement of the page)
and the attached bands in the page.
The rotation number computes as the sum of the indices of a vector field in the
contact structure extending the positive tangent of $K$ over $\Sigma$.
As $\rot(\beta_i) = 0$, an extension without zeros is possible over the discs
bounded by $\beta_i$ and we only have to study the bands.
As the contact framing and the page framing coincide, this reduces the problem
to extending the positive tangent vector field to the boundary of the bands over
the bands in $\Sigma$. This is $\pm 1$ for each band by Poincar\'e--Hopf,
depending on whether the orientation of the band agrees with the orientation of
the page $\Sigma$ or not.
Hence, the rotation number of $K$ is a signed count of the number of bands, i.e.\
$\rot (K) = \sum_{i=1}^{k} b_i - \sign\big(\sum_{i=1}^{k}b_i\big)$.
\end{proof}

\begin{figure}[htb] 
\centering
\def\svgwidth{0.55\columnwidth}
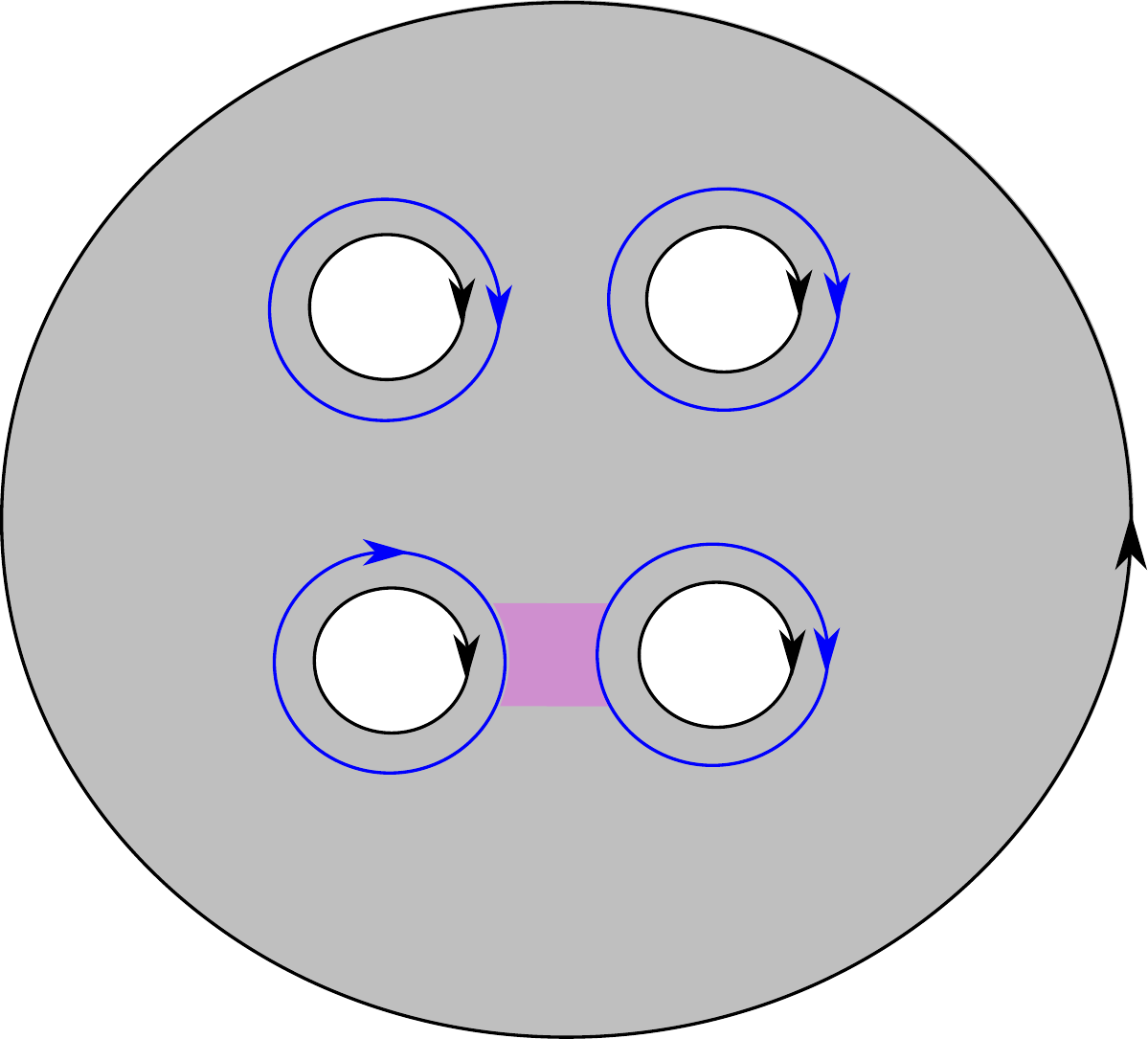
\caption{A Legendrian knot on the page of a planar open book of $(S^3,\xist)$}
\label{fig:PlanarExample}
\end{figure}

\begin{remark}
The formula from Proposition~\ref{prop:planar} can also be obtained by observing
that a curve enclosing $k$-holes is the result of $(k-1)$-times stabilising a curve running
around a single hole. The latter has Thurston--Bennequin invariant
$-1$ and vanishing rotation number.
\end{remark}

\begin{example}
\label{ex:planar}
Consider the Legendrian knot $L$ on the planar open book of $(S^3,\xist)$ as depicted
in Figure~\ref{fig:PlanarExample}.
The class in the first homology group of $\Sigma$ represented by $L$ can be written as 
$$
L = \sum_{i=1}^{4} b_i\beta_i = \beta_2 +\beta_3 +\beta_4.
$$
By Proposition~\ref{prop:planar}, the rotation number of $L$ is
$$
\rot(L)
= \sum_{i=1}^{4} b_i - \sign\left(\sum_{i=1}^{k}b_i\right)
= 2.
$$
\end{example}

This method is not known to generalise to non-planar open books. One reason is,
that on surfaces of higher genus, the isotopy class of a curve is not determined
by its homology class.

%%%%%%%%%%%%%%%%%%%%%%%%%%%%%%%%%%%%%%%%%%%%%%%%%%%%%%%%%%%%%%%%%%%%%%%%%%%%%%%%

\section{Another special case}

Next we consider knots on open books $(\Sigma, \phi)$ of the
standard contact $3$-sphere with an arbitrary page but a special monodromy.
Denote the genus of $\Sigma$ by $g$ and the number of boundary components by $h+1$.
Suppose that the monodromy is given by
$$
\phi = \beta_{g+h}^{+1} \circ\cdots\circ \beta_{g+1}^{+1}\circ \beta_g^{+1} \circ \alpha_g^{+1} \circ\cdots\circ
\beta_1^{+1} \circ \alpha_1^{+1}
$$
as indicated in Figure~\ref{fig:AbstractOpenBookWithBoundary}. We also choose orientations
of $\alpha_i$ and $\beta_i$ as in
the picture. In particular, the signed count $\alpha_i \bullet \beta_j$ of intersection points between $\alpha_i$ and $\beta_j$ is $\delta_{ij}$.
Let $r_i$, $i=1,\ldots,g+h-1$, be the depicted reducing arcs, which do not intersect
the $\alpha$- and $\beta$-curves, i.e.\ when cutting
along them the
page $\Sigma$ decomposes into a collection of tori
 with a disc removed and annuli.
Let $a_i$ and $b_i$ be arcs on the page $\Sigma$ representing a basis
of $H_1(\Sigma,\partial\Sigma)$ dual to $\{\alpha_i,\beta_i\}$ with respect to
the intersection product (oriented such that $\alpha_i\bullet a_i = 1$, $\beta_j\bullet b_j=1)$.

\begin{figure}[htb] 
\centering
\def\svgwidth{\columnwidth}
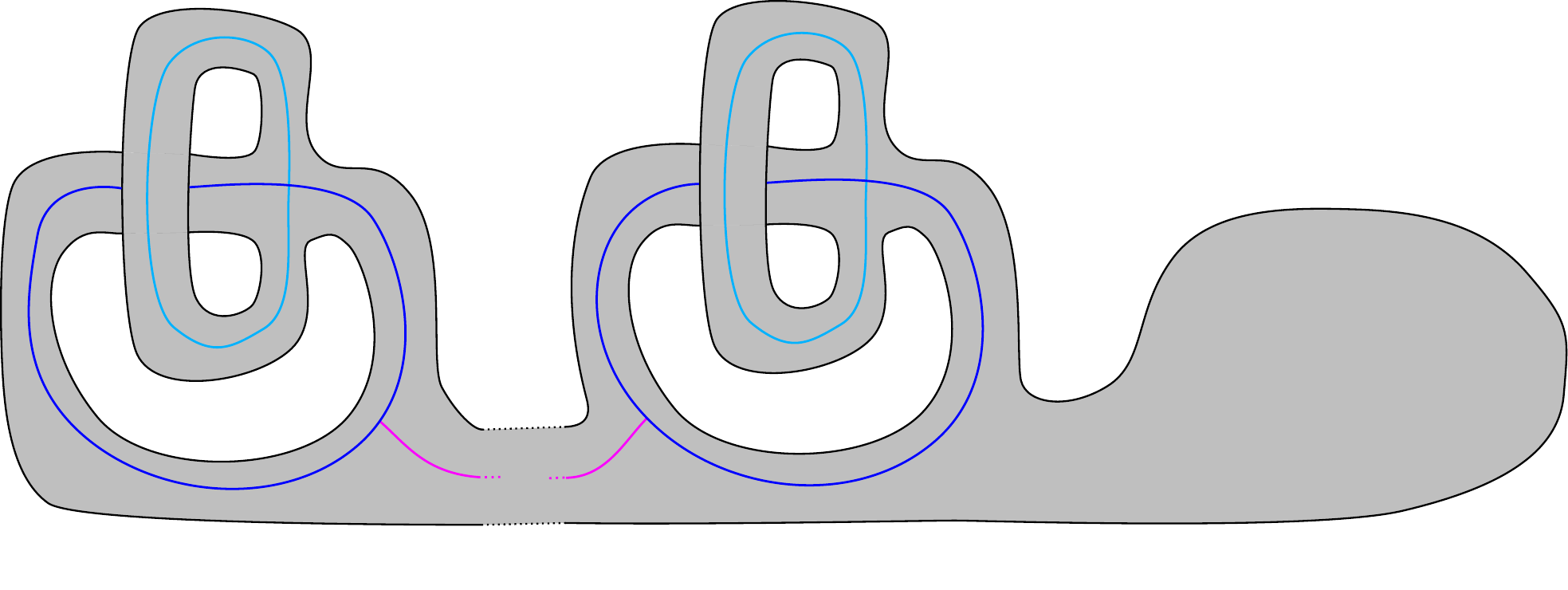
\caption{A non-planar open book of $(S^3,\xist)$ with arbitrarily many boundary components}
\label{fig:AbstractOpenBookWithBoundary}
\end{figure}

The following algorithm will be applied to a word corresponding to the knot $K$
in Proposition~\ref{prop:special_s3}.
Note that the conventions presented below for labelling vertical tangencies in this setting by $\rho_+$ and $\lambda_+$
do not agree with those for counting cusps of a Legendrian front
projection as in \cite[Proposition~3.5.19]{Geiges2008}.

\begin{algorithm}
\label{algo:c_from_word}
Let $w$ be a word in $\langle\alpha_i, \beta_i \mid i=1\ldots, k\rangle$.
Set $\lambda_+$ to be the number of times a $\beta^{-1}$ is followed by an
$\alpha^{-1}$ of the same index also considering the step from the last to the first letter,
and similarly, set $\rho_+$ equal to the number of times an $\alpha^{-1}$ is
followed by a $\beta^{-1}$ of the same index.

Denote places where the index changes by $r_u$ ($r_d$) if the index
increases (decreases) -- including the last position if the index of the last letter is not equal to the index of the first letter.
For instance, in the word
$$
\alpha_1 \beta_2 \alpha_2 \beta_4^{-1} \alpha_3^{-1}\beta_2
$$
we have five positions of index changes:
$$
\alpha_1 r_u \beta_2 \alpha_2 r_u \beta_4^{-1} r_d \alpha_3^{-1} r_d \beta_2 r_d.
$$

Now run through the index changes and increase $\lambda_+$ and $\rho_+$
according to the following rule:
\begin{itemize}
\item increase $\lambda_+$ by $1$ for
\begin{itemize}
\item a $\beta^{-1}$ followed by $r_u$
\item $r_d$ followed by an $\alpha^{-1}$
\end{itemize}
\item increase $\rho_+$ by $1$ for
\begin{itemize}
\item an $\alpha^{-1}$ followed by $r_d$
\item a $\beta$ followed by $r_d$.
\end{itemize}
\end{itemize}
In the example sequence above, we have $\lambda_+ = 0=\rho_+$ as staring values and then get
$\lambda_+ = 1$ and $\rho_+ = 2$ as the final result after following the rules for the increments.
\end{algorithm}

\begin{proposition}
\label{prop:special_s3}
Let $K$ be an oriented non-isolating knot on the abstract open book $(\Sigma, \phi)$
of $(S^3,\xist)$ specified above.
Choose a starting point on $K$ and write $K$ as a word in the $\alpha_i$ and $\beta_i$
by noting intersections with $a_i$ and $b_i$ when traversing along $K$.
Then the rotation number of $K$ is
$$
\rot (K) = \rho_+ - \lambda_+
$$
with $\rho_+$ and $\lambda_+$ calculated from the presentation of $K$ as described in
Algorithm~\ref{algo:c_from_word}.
\end{proposition}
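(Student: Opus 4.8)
The plan is to use Avdek's method to put the abstract open book $(\Sigma,\phi)$ into $(S^3,\xist)$ so explicitly that the word in the $\alpha_i,\beta_i$ encoding $K$ can be converted, letter by letter, into a diagram of $K$ as a Legendrian knot, and then to compute $\rot(K)$ by the usual ``count the cusps'' recipe adapted to that diagram. Concretely: since $\phi$ is exactly the product of the positive Dehn twists obtained by stabilising the trivial open book of $(S^3,\xist)$, Avdek's algorithm puts $(\Sigma,\phi)$ into $(S^3,\xist)$ with the page, the curves $\alpha_i,\beta_i$, the dual arcs $a_i,b_i$ and the reducing arcs $r_i$ all in a fixed normal form; in the resulting front-type projection the quantities $\rho_+$ and $\lambda_+$ of Algorithm~\ref{algo:c_from_word} are precisely the numbers of the two kinds of vertical tangencies of the diagram of $K$, so proving the proposition amounts to identifying these tangencies combinatorially from the word.

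First I would record the local models. Cutting $\Sigma$ along $r_1,\dots,r_{g+h-1}$ splits it into genus-one pieces with one boundary component $\Sigma_1,\dots,\Sigma_g$ (the $i$-th carrying $\alpha_i,\beta_i,a_i,b_i$) and annular pieces carrying the boundary-parallel curves $\beta_{g+1},\dots,\beta_{g+h}$, and correspondingly the cyclic word of $K$ breaks into maximal syllables of constant index, joined at the index changes $r_u$ and $r_d$. Just as in the planar case one checks that in this embedding every $\alpha_i$ and every $\beta_i$ is realised as a $\tb=-1$ Legendrian unknot with vanishing rotation number, which pins down the local geometry of each piece; then inside a single piece a syllable traces $K$ around the corresponding handle (or around the annulus) along a standard zig-zag, and one sees that a vertical tangency of the diagram is forced at a syllable junction exactly when an $\alpha^{-1}$ is immediately followed by a $\beta^{-1}$ of the same index (counted by $\rho_+$) or a $\beta^{-1}$ is immediately followed by an $\alpha^{-1}$ of the same index (counted by $\lambda_+$), and at no other junction of constant index. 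Equivalently, assembling a spanning surface for $K$ from the discs bounded by the $\alpha_i,\beta_i$ in the complement of the page together with bands lying in $\Sigma$, these are the places where the positive tangent of $K$, pushed into $\xist$, fails to extend over the surface without a zero, the sign of the zero being $+$ or $-$ accordingly; this is the direct generalisation of the Poincar\'e--Hopf count in the proof of Proposition~\ref{prop:planar}.

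The remaining, and most delicate, step is the contribution of the index changes, i.e.\ the four $r_u/r_d$ rules in Algorithm~\ref{algo:c_from_word}. To derive them one must fix once and for all how two adjacent pieces of $\Sigma$ are glued along a reducing arc inside $(S^3,\xist)$ and then follow how the Gauss map of $K$ rotates as $K$ crosses that arc; the bookkeeping is genuinely fiddly because the $\rho_+/\lambda_+$ labelling of vertical tangencies used here is not the one familiar from counting cusps of a Legendrian front (the remark preceding the algorithm), and because it has to be kept consistent with the chosen orientation of the page $\Sigma$ and with the orientations of the $\alpha_i,\beta_i$. Once each junction has been assigned its contribution, the signed total is $\rho_+-\lambda_+$; the count is manifestly cyclic, hence independent of the chosen basepoint on $K$, as it must be since $\rot$ is an invariant, and this yields $\rot(K)=\rho_+-\lambda_+$. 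I expect the verification of the $r_u/r_d$ rules, together with the check that they exhaust all tangency-creating junctions, to be the main obstacle; the rest follows the template of the planar proposition.
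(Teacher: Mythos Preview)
Your outline matches the paper's proof: embed $(\Sigma,\phi)$ in $(S^3,\xist)$ via Avdek's algorithm, compute $\rot(K)$ as a signed count of vertical tangencies of $K$ on the embedded page in the front projection, and then identify these tangencies combinatorially from the word (first the same-index junctions, then the index-change junctions via a case analysis that produces Table~1). The one substantive step you leave implicit, and which the paper supplies, is \emph{why} vertical tangencies are the right thing to count: the paper trivialises $\xist$ globally by $\partial_x$, observes that the contact vector field $\partial_z$ is transverse to the embedded page (so projection along $\partial_z$ carries this to a trivialisation of $T\Sigma$), and notes that the projected $\partial_x$ lies in the $xz$-plane, whence the rotation of $K$'s tangent relative to it is read off from vertical tangencies of the front diagram. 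Your Poincar\'e--Hopf/spanning-surface alternative is your own addition; the paper does not argue this proposition that way (that technique is confined to the planar Proposition~\ref{prop:planar}), and in the higher-genus setting it would need extra care since the $\alpha_i$ and $\beta_i$ intersect on the page and the na\"{\i}ve discs-and-bands surface is not immediately an embedded Seifert surface for $K$. The paper also begins by reducing to the case of a single boundary component via stabilisations along arcs disjoint from the $r_i$, which you omit.
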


\begin{proof}\hfill\\
First note that without loss of generality, we can assume that the page $\Sigma$
has only a single boundary component by stabilising the open book along arcs not
intersecting the $r_i$ connecting a hole to the outer boundary component.
Then the open book $(\Sigma, \phi)$ can be embedded into $(S^3, \xi_\text{st})$
with the front projection shown in Figure~\ref{fig:embeddedOpenBook}
(in lightly shaded regions the orientation of $\Sigma$ agrees with the blackboard orientation, in darkly shaded regions the orientations disagree)
-- the embedded page $\Sigma$ is the ribbon of the Legendrian graph
displayed in the upper half of Figure~\ref{fig:embeddedOpenBook}
(see \cite{Avdek2013} for details). Note that in particular, the contact vector field
$\partial_z$ is transverse to the embedded page.
Furthermore, after rescaling the embedding can be assumed to be such that
in $\R^3\subset S^3$ we have
\begin{align*}
[-1,1]\times \Sigma &\longrightarrow \big(\R^3, \xi_\textrm{st} = \ker(xdy + dz)\big),\\
(t,p)&\longmapsto p + (0,0,t),
\end{align*}
i.e.\ for a point $p$ in the interior of the embedded page $\{0\}\times \Sigma$ the line through $p$ parallel to the $z$-axis hits every page in $[-1,1]\times \Sigma$ exactly once. So we can relate to a specific page in $[-1,1]\times \Sigma$ by its shift in the $z$-direction.

\begin{figure}[htb] 
\centering
\def\svgwidth{\columnwidth}
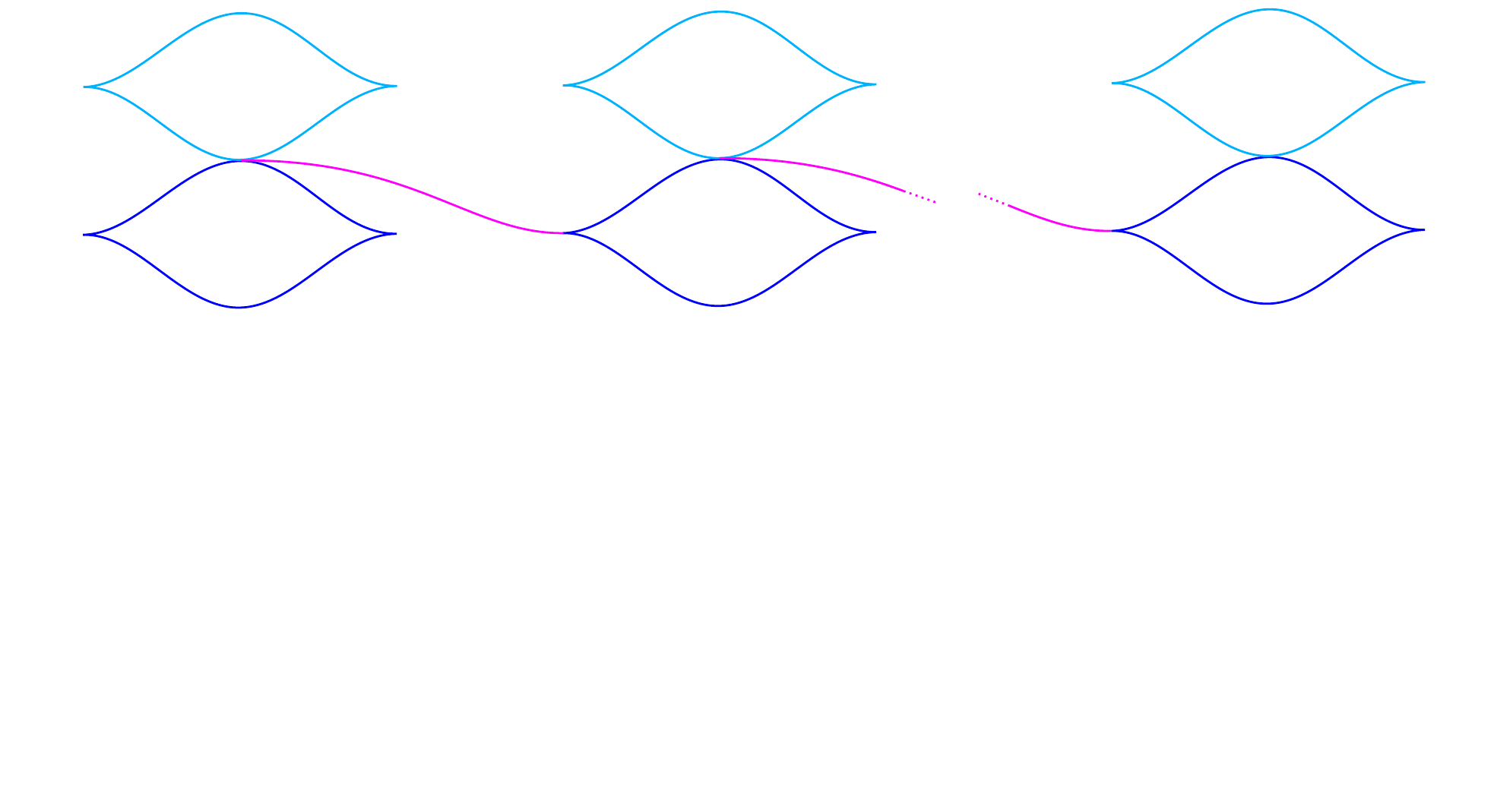
\caption{An embedding into $(S^3,\xist)$ of the (stabilised) abstract open book from Figure~\ref{fig:AbstractOpenBookWithBoundary} and the Legendrian graph shown in the front projection}
\label{fig:embeddedOpenBook}
\end{figure}

The rotation number of a nullhomologous Legendrian knot with respect to a Seifert surface $S$
is given by the rotation of its tangent vector
with respect to a fixed trivialisation of the contact planes over $S$.
If the contact structure is globally trivialisable,
one can instead fix a global trivialisation.
The standard contact structure $\xist$ on $\R^3\subset (S^3,\xist)$ can be trivialised globally by
$\partial_x$ and $\partial_y - x\partial_z$.
As the contact vector field $\partial_z$ is transverse to the embedded page $\Sigma$
of the open book, this trivialisation also induces a trivialisation of the tangent
planes to $\Sigma$.
Then the rotation number of the Legendrian realisation of a curve sitting on the
page agrees with the rotation of the original curve on the page with respect to the induced trivialisation.

The projection of $\partial_x$ to $\Sigma$ along $\partial_z$ lies in the
$xz$-plane.
Observe that the $\partial_z$-component changes sign when passing from a lightly
shaded region to a darkly shaded region and vice-versa.
To compute the rotation of a curve on the embedded page
which is non-singular in the front projection diagram, we thus have to count
vertical tangencies in the front projection
according to the rule described in Figure~\ref{fig:lambdaRho}.
The rotation then equals $\rho_+ - \lambda_+$.
Alternatively, we can also compute it as $\lambda_- - \rho_-$ where $\lambda_-$ and $\rho_-$ are defined analogously to $\lambda_+$ and $\rho_+$.

\begin{figure}[htb] 
\centering
\def\svgwidth{\columnwidth}
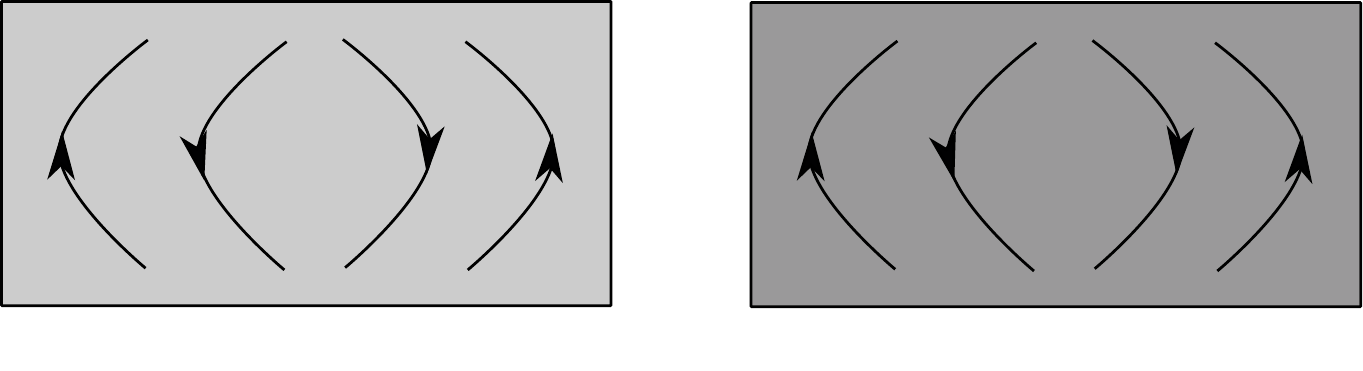
\caption{The labelling of the vertical tangencies.}
\label{fig:lambdaRho}
\end{figure}

In fact, we do not even have to count all vertical tangencies,
but we can ignore those cancelling each other.
To this end, we write $K$ as a word in the $\alpha_i$ and $\beta_i$
by noting intersections with $a_i$ and $b_i$ when traversing along $K$.
Observe that the $\alpha$- and $\beta$-curves have vanishing rotation, as they
have two vertical tangencies cancelling each other.
Changing from $\alpha_i$ to $\beta_i$ accounts for a $\lambda_-$,
changing from $\beta_i$ to $\alpha_i$ for a $\rho_-$.
Likewise, the change from $\alpha_i^{-1}$ to $\beta_i^{-1}$ gives a $\rho_+$,
the one from $\beta_i^{-1}$ to $\alpha_i^{-1}$ a $\lambda_+$.
It is easily verified that all other changes with fixed index do not introduce
vertical tangencies.
In particular, a knot not intersecting any of the reducing arcs
has vanishing rotation number, since it has $\lambda_+ = \rho_+$.
It thus remains to inspect those tangencies occurring before or after an intersection
with a reducing arc.
These intersections happen when the index of the letters change. The vertical
tangencies occurring in these cases are summarised in Table~1.

%\begin{center}
\begin{table}[h]
\begin{tabular}{c|c}
leaving to the right from & count \\
\hline
$\alpha$ & $\lambda_-$\\
$\alpha^{-1}$ & --\\
$\beta$ & --\\
$\beta^{-1}$ & $\lambda_+$\\
\hline
\hline
coming from the left to & count \\
\hline
$\alpha$ & $\rho_-$ \\
$\alpha^{-1}$ & --\\
$\beta$ & --\\
$\beta^{-1}$ & $\rho_+$\\
\hline
\hline
leaving to the left from & count \\
\hline
$\alpha$ & --\\
$\alpha^{-1}$ & $\rho_+$\\
$\beta$ & $\rho_+$\\
$\beta^{-1}$ & --\\
\hline
\hline
coming from the right to & count \\
\hline
$\alpha$ & --\\
$\alpha^{-1}$ & $\lambda_+$\\
$\beta$ & $\lambda_-$\\
$\beta^{-1}$ & --\\
\hline
\end{tabular}
\vspace{0.2cm}
\caption{}
\end{table}
%\end{center}

Hence,
the rotation number can be computed from the word according to the rule
given in Algorithm~\ref{algo:c_from_word}.
\end{proof}

\begin{example}
Consider the knot on the embedded page of the open book of $(S^3,\xist)$
 given in Figure~\ref{fig:example_S3}.
The knot corresponds to the word
$\alpha_1 \beta_2 \alpha_2 \alpha_3^{-1} \beta_3 \beta_2$.
The vertical tangencies corresponding to the $\alpha$- and $\beta$-curves which
immediately cancel are marked in green.
The remaining vertical tangencies are marked blue and labelled.
We have
$\rho_+ = 2$, $\lambda_+ = 0$, $\rho_- = 1$, $\lambda_- = 3$,
i.e.\
the rotation number of the Legendrian knot represented by $K$ is
$$
\rot (K) = \rho_+ - \lambda_+  = \lambda_- - \rho_- = 2.
$$

\begin{figure}[htb] 
\centering
\def\svgwidth{\columnwidth}
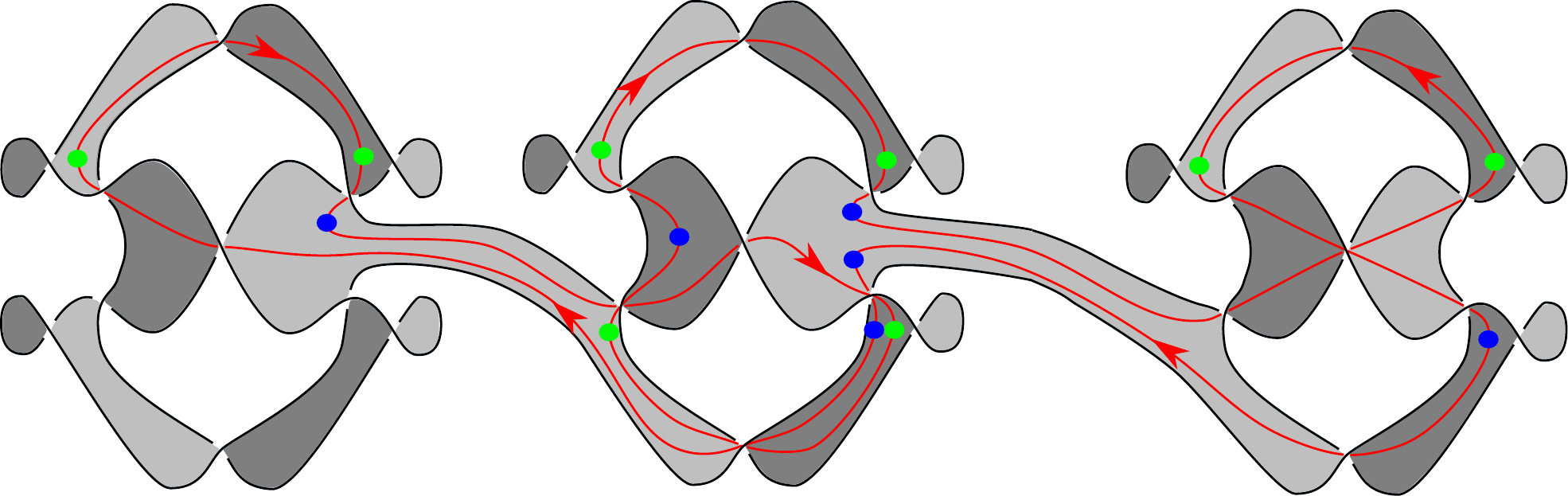
\caption{A knot on an embedded page in $(S^3,\xist)$. Vertical tangencies cancelling
each other are marked green, other vertical tangencies are marked blue and labelled.}
\label{fig:example_S3}
\end{figure}

We will now apply Algorithm~\ref{algo:c_from_word} on the word
$\alpha_1 \beta_2 \alpha_2 \alpha_3^{-1} \beta_3 \beta_2$.

As neither a $\beta^{-1}$ is followed by an $\alpha^{-1}$ of the same index,
nor an $\alpha^{-1}$ by a $\beta^{-1}$, we set $\lambda_+ = 0 = \rho_+$ as initial values.
Next, we consider the index changes:
$$\alpha_1 r_u \beta_2 \alpha_2 r_u \alpha_3^{-1} \beta_3 r_d \beta_2 r_d.$$
The positions $\beta_3 r_d$ and $\beta_2 r_d$ both increase $\rho_+$ by one,
all other positions leave the counts unchanged.
Hence, the algorithm yields
$$
\rot (K) = \rho_+ - \lambda_+ = 2.
$$
Note that we could also adapt the rules specified in the algorithm to consider
$\rho_-$ and $\lambda_-$ instead using the proof of the preceding proposition.
\end{example}

%%%%%%%%%%%%%%%%%%%%%%%%%%%%%%%%%%%%%%%%%%%%%%%%%%%%%%%%%%%%%%%%%%%%%%%%%%%%%%%%
%%%%%%%%%%%%%%%%%%%%%%%%%%%%%%%%%%%%%%%%%%%%%%%%%%%%%%%%%%%%%%%%%%%%%%%%%%%%%%%%

\section{The general case}
\label{section:general_case}

Now we are prepared to deal with a Legendrian knot in a general contact open book.
The idea is to change the open book to the special case discussed in the
previous section by a sequence of surgeries, then compute the rotation number
in $(S^3,\xist)$ as above and finally use \cite{rot_surgery} with the
inverse surgeries to get the rotation number of the Legendrian in the original
open book. The result will be presented in a formula that can be directly
computed with the data of the original open book.

In the following remark, we will briefly recall how to compute the rotation number
in contact surgery diagrams.
%%%%%%%%%%%%%%%%%%%%%%%%%%%%%%%%%%%%%%%%%%%%%%%%%%%%%%%%%%%%%%%%%%%%%%%%%%%%%%%%
\begin{remark}[Computing $\rot$ in a surgery diagram (see \cite{rot_surgery})]
\label{rem:rot_surgery}
For an oriented Legendrian link $L = L_1 \sqcup \ldots \sqcup L_k$ in
$(S^3, \xist)$
let $(M, \xi)$ be the contact manifold obtained from $(S^3,\xist)$ by contact $({1}/{n_i})$-surgeries ($n_i \in \Z$) along $L_i$.
Denote the topological surgery coefficients by ${p_i}/{q_i}$,
i.e.\
$$
\frac{p_i}{q_i} = \frac{n_i\tb(L_i) + 1}{n_i}.
$$
Let $L_0$ be an oriented Legendrian knot in the complement of $L$ and define
the vector $\mathbf{l}$ with components $l_i = l_{0i}$
and the generalised linking matrix
$$
Q = \begin{pmatrix} p_1 & q_2 l_{12} & \cdots & q_n l_{1k} \\
q_1 l_{21} & p_2 & & \\
\vdots & & \ddots & \\
q_1 l_{k1} & & & p_k \end{pmatrix},
$$
where $l_{ij} := \lk (L_i, L_j)$.
The knot $L_0$ is (rationally) nullhomologous in $M$ if and only if there is an integral (rational) solution
$\mathbf{a}$ of the equation $\mathbf{l} = Q\mathbf{a}$, 
in which case its (rational) rotation number in $(M, \xi)$ with respect to the Seifert class
$\widehat{\Sigma}$ constructed in \cite{rot_surgery} is equal to
$$
\rot_M (L_0, \widehat{\Sigma}) = \rot_{S^3} (L_0) - \sum_{i=1}^k{a_i n_i \rot_{S^3} (L_i)}.
$$
\end{remark}
%%%%%%%%%%%%%%%%%%%%%%%%%%%%%%%%%%%%%%%%%%%%%%%%%%%%%%%%%%%%%%%%%%%%%%%%%%%%%%%%

\begin{proof}[{Proof of Theorem~\ref{thm:rot_in_ob}}]\hfill\\
Let $K\subset (M, \xi)$ be a Legendrian knot sitting on the page of a compatible
open book
$$
(\Sigma, \phi = T_l^{\pm n_l} \circ\cdots\circ T_1^{\pm n_1})
$$
with monodromy encoded in a concatenation of Dehn twists,
where $T^{\pm n}$ denotes $n$ positive (resp.\ negative) Dehn twists along the
non-isolating oriented curve $T$ ($n\in \N$).
We denote the genus of $\Sigma$ by $g$ and the number of boundary components by $h+1$.

In the following, we want to choose a \emph{special} arc basis of $\Sigma$ to
exactly mimic the setting from Proposition~\ref{prop:special_s3} (also see Remark~\ref{rem:special_arcs}).
Together with a suitable monodromy yielding $(S^3,\xist)$, this will enable us to use the proposition to compute the invariants first in $(S^3,\xist)$ and then to apply the surgery formulas to obtain the desired result.

Choose reducing arcs $r_1,\ldots, r_{g+h-1}$ such that
when cutting along $r_i$
\begin{itemize}
\item
$\Sigma$ decomposes into a surface $\Sigma_i$ of genus $i$ containing $r_1,\ldots, r_{i-1}$ with one boundary component
and a surface of genus $g-i$ with $h+1$ boundary components
for $i=1,\ldots,g$,
\item
$\Sigma$ decomposes into a surface $\Sigma_i$ of genus $g$ containing $r_1,\ldots, r_{i-1}$ with $i+1$ boundary components
and a disk with $h-i$ holes
for $i=g+1,\ldots,g+h-1$.
\end{itemize}

Then choose an arc basis of $\Sigma_i\setminus\Sigma_{i-1}$. Orient and label the arcs by $a_i, b_i$ such that when travelling along the oriented boundary of $\Sigma$ from 
\begin{itemize}
\item $r_1$ to $r_1$
\begin{itemize}
\item first $a_1$ is met pointing outwards, then $b_1$ is met pointing inwards
if $g\geq 1$
\item
$b_1$ is met and pointing outwards if $g=0$
\end{itemize}
\item $r_{i-1}$ to $r_i$ only $b_i$ is met and pointing outwards ($i=2,\ldots,g+h-2$)
\item
$r_{g+h-1}$ to $r_{g+h-1}$
\begin{itemize}
\item first $b_g$ is met pointing outwards, then $a_g$ is met pointing outwards
if $h=0$
\item $b_{g+h}$ is met and pointing outwards if $h>0$.
\end{itemize}
\end{itemize}

Choose non-trivial oriented simple closed curves
$\alpha_i,\beta_i$ representing a basis of $H_1(\Sigma)$ dual to the arcs
with respect to the intersection product on $\Sigma$
oriented such that $\alpha_i\bullet a_i = 1$, $\beta_j\bullet b_j=1$ and
$\alpha_i\bullet\beta_i = 1$
(i.e.\ the situation is as in Figure~\ref{fig:AbstractOpenBookWithBoundary}).

\begin{remark}
\label{rem:special_arcs}
Note that the arc basis cannot be chosen arbitrarily, as we will use it to write
the knot as a word in $\alpha_i, \beta_i$ as above and use the formula from
Proposition~\ref{prop:special_s3} to compute the rotation from this word.
For this to work with the given formula, we have to ensure that the word we get
in the abstract setting is the same as the word we get in the embedded case,
which coincides with the specific abstract open book depicted in
Figure~\ref{fig:AbstractOpenBookWithBoundary}.
In particular, the word obtained from the oriented boundary of the page is
$$
\alpha_1^{-1} \beta_1 \alpha_1 \beta_1^{-1} \beta_2^{-1}\cdots
\beta_{g+h}^{-1} \alpha_g^{-1}\beta_g\alpha_g \cdots
\alpha_2^{-1}\beta_2\alpha_2.
$$
A different arc basis would require a different formula to compute the rotation
number from the word, see also
Example~\ref{ex:planar_word}.

This is only important for calculating the rotation number in $(S^3,\xist)$
which is not determined by the class of the knot in the homology of the page --
the linking information required to compute the rotation number via the
surgery formula is purely homological and does not depend on the specific ordering.
In particular, we can use an arbitrary arc basis in a planar open book if we
use Proposition~\ref{prop:planar} to compute the rotation number of the involved curves in
$(S^3,\xist)$.
\end{remark}

Observe that we can get from the open book
$$
(\Sigma, \phi_{S^3} = 
\beta_{g+h}^{+1} \circ\cdots\circ \beta_{g+1}^{+1}\circ
\beta_g^{+1} \circ \alpha_g^{+1} \circ\cdots\circ
\beta_1^{+1} \circ \alpha_1^{+1})
$$
to the open book
$(\Sigma, \phi)$
by a sequence of contact surgeries along Legendrian knots corresponding to the
Dehn twist curves.

By the algorithm presented in \cite{Avdek2013}, the surgery link is as follows:
every component corresponding to a Dehn twist sits on a page of the embedded open
book, the shift in $z$-direction of the respective page relates to the position of the Dehn
twists in the monodromy factorisation -- the later the Dehn twist is performed, the higher the level of the page.
Using Avdek's convention,
we will denote a knot $K$ sitting on the page with level $t$ by $K(t)$.

Observe that the $\alpha_i(s), \beta_i(s)$ are unknots with rotation number zero
and Thurs\-ton--Bennequin invariant $-1$
and that for $t\neq s$ we have
$$
\arraycolsep=5pt\def\arraystretch{1.6}
\begin{array}{rl}
\lk\big(\alpha_i(t), \beta_j(s)\big) = &
\begin{cases}
0, & \textrm{if } i\neq j \textrm{ or }t>s,\\
-1, & \textrm{if } i= j \textrm{ and }t<s,
\end{cases}\\[0.5cm]
\lk\big(\alpha_i(t), \alpha_j(s)\big) = &
\begin{cases}
0, & \textrm{if } i\neq j,\\
-1, & \textrm{if } i = j,
\end{cases}\\[0.5cm]
\lk\big(\beta_i(t), \beta_j(s)\big) = &
\begin{cases}
0, & \textrm{if } i\neq j,\\
-1, & \textrm{if } i = j.
\end{cases}
\end{array}
$$
If $s=t$, the curves form a Legendrian graph on a single page with $\alpha$ and $\beta$ joined by a single transverse intersection point.

The first homology class represented by a knot $K$ on $\Sigma$ can then be written as
$$
K = \sum_{i=1}^{g+h}\big((K\bullet a_i) \alpha_i + (K\bullet b_i) \beta_i)\big)
$$
and hence
$$
K(t) = \sum_{i=1}^{g+h}\big((K\bullet a_i) \alpha_i(t) + (K\bullet b_i) \beta_i(t)\big),
$$
where $(K\bullet a_i)$ is defined to be zero for $k>g$.
The linking number of two knots $K_1(t)$ and $K_2(s)$ behaves linear and distributive
with respect to this decomposition, i.e.\ the linking number is easily computable with
the linking behaviour of the $\alpha$ and $\beta$ curves specified above.

It is well-known that the surgery link in $S^3$ to obtain
$(\Sigma, \phi)$ is the link $L = L_1 \sqcup \ldots \sqcup L_{2g+h+l}$ as
specified in Table~2 (e.g.\ see Avdek's algorithm~\cite{Avdek2013}).

\begin{table}[h]
$$
\begin{array}{c|c|c}
\textrm{name} & \textrm{knot} & \textrm{contact surgery coefficient}\\
\hline
%%%%%
L_1 & \beta_1(-1) & +1\\
\vdots & \vdots & \vdots\\
L_{g+h} & \beta_{g+h}(-1) & +1\\
%%%%%
L_{g+h+1} & \alpha_{1}(0) & +1\\
\vdots & \vdots & \vdots\\
L_{2g+h} & \alpha_{g}(0) & +1\\
%%%%%
L_{2g+h+1} & T_{1}(1/l) & {\mp 1}/{n_1} \\
\vdots & \vdots & \vdots\\
L_{2g+h+l} & T_{l}(l/l) & {\mp 1}/{n_l}\\
\end{array}
$$
\vspace{0.2cm}
\caption{}
\end{table}

To compute the rotation number of a knot on a page of $(\Sigma, \phi)$ using the
method explained in Remark~\ref{rem:rot_surgery},
we need the generalised linking matrix $Q$ -- which requires us to know $\tb$
for deducing the topological surgery coefficient from the contact one as well
as all linking numbers -- and the rotation numbers in $(S^3, \xist)$.

%$L_i = \beta_i(-1)$, $i=1,\ldots,g+h$
%
%$L_i = \alpha_{j}(0)$, $j=i-(g+h)$, $i=g+h+1,\ldots,2g+h$
%
%$L_i = T_{j}(j/l)$, $j=i-(2g+h)$, $i=2g+h+1,\ldots,2g+h+l$

%$$
%\begin{array}{c|c|c|c|c|c}
%\textrm{name} & \textrm{knot} & \textrm{contact coefficient coefficient} &
%\tb_{S_3} & p/q & \rot_{S^3}\\
%\hline
%%%%%%
%L_1 & \beta_1(-1) & +1 & -1 & 0/1 & 0\\
%\vdots & \vdots & \vdots& \vdots & \vdots & \vdots \\
%L_{g+h} & \beta_{g+h}(-1) & +1 & -1 & 0/1 & 0 \\
%%%%%%
%L_{g+h+1} & \alpha_{1}(0) & +1 & -1 & 0/1 & 0 \\
%\vdots & \vdots & \vdots& \vdots & \vdots & \vdots \\
%L_{2g+h} & \alpha_{g}(0) & +1 & -1 & 0/1 & 0 \\
%%%%%%
%L_{2g+h+1} & T_{1}(1/l) & \mp n_1 & && \\
%\vdots & \vdots & \vdots& \vdots & \vdots & \vdots \\
%L_{2g+h+l} & T_{l}(l/l) & \mp n_l & && \\
%\end{array}
%$$

For a knot $K(t)$, we have
$$\tb_{S^3}\big(K(t)\big) = \lk \big(K(t), K(t+\varepsilon)\big)$$
and hence, 
for $i=1,\ldots,l$,
$$
\tb_{S^3}(L_{2g+h+i})
=
-\sum_{k=1}^{g+h}\big( (T_i\bullet a_k)^2 + (T_i \bullet a_k)(T_i\bullet b_k)
+ (T_i\bullet b_k)^2 \big).
$$
Therefore, the topological surgery coefficient of $L_{2g+h+i}$ is
$$
\frac{p_{2g+h+i}}{q_{2g+h+i}} = \frac{n_i\tb_{S^3}(T_i)\mp 1}{n_i}.
$$
Furthermore,
the linking behaviour with $L_j = \beta_j$, $j=1,\ldots,g+h$ is
$$
\lk (L_{2g+h+i}, L_j) = - (T_i\bullet b_j)
$$
and similarly, for $L_{g+h+j} = \alpha_j$, $j=1,\ldots,g$
$$
\lk (L_{2g+h+i}, L_{g+h+j}) = - \big( (T_i\bullet a_j) + (T_i\bullet b_j) \big).
$$
The linking number of two surgery knots $L_{2g+h+i}$ and $L_{2g+h+j}$ with $i<j$ can be
computed to be
$$
\lk (L_{2g+h+i}, L_{2g+h+j})
=
-\sum_{k=1}^{g+h}\big( (T_i\bullet a_k)(T_j\bullet a_k)
+ (T_i \bullet a_k)(T_j\bullet b_k)
+ (T_i\bullet b_k)(T_j\bullet b_k) \big).
$$

Note that the knot $K$ can be put on the page with the lowest as well as the highest
level.
Depending on which is chosen, the class of Seifert surface with respect to which the
rotation number is given in Remark~\ref{rem:rot_surgery} might change, and hence
the rotation numbers may differ.
However, if the Euler class of $\xi$ vanishes, the rotation number of a nullhomologous is independent
of the Seifert surface.
If we choose the knot $L_0 = K(\textrm{low})$ to sit on a lower page than the surgery link, we get
the following linking numbers
$$
\arraycolsep=5pt\def\arraystretch{1.6}
\begin{array}{rl}
\lk (L_0, L_j) = & - \big( (K\bullet a_j) + (K\bullet b_j) \big),\; j=1,\ldots,g+h,\\
\lk (L_0, L_{g+h+j}) = & - (K\bullet a_j),\; j=1,\ldots,g,\\
\lk (L_0, L_{2g+h+j})
=&
-\displaystyle\sum_{k=1}^{g+h}\big( (K\bullet a_k)(T_j\bullet a_k)
+ (K\bullet a_k)(T_j\bullet b_k)\\&
+ (K\bullet b_k)(T_j\bullet b_k) \big),\; $j=1,\ldots,l$.
\end{array}
$$

If on the other hand $L_0 = K(\textrm{high})$ is assumed to sit on a page with
the highest level, we get
$$
\arraycolsep=5pt\def\arraystretch{1.6}
\begin{array}{rl}
\lk (L_0, L_j) = &- (K\bullet b_j),\; j=1,\ldots,g+h,\\
\lk (L_0, L_{g+h+j}) = & - \big( (K\bullet a_j) + (K\bullet b_j) \big),\; j=1,\ldots,g,\\
\lk (L_0, L_{2g+h+j})
=&
-\displaystyle\sum_{k=1}^{g+h}\big( (K\bullet a_k)(T_j\bullet a_k)
+ (K\bullet b_k)(T_j\bullet a_k)\\&
+ (K\bullet b_k)(T_j\bullet b_k) \big),\; j=1,\ldots,l.
\end{array}
$$

The only data that is left to compute are the rotation numbers in $S^3$ of the $L_i$,
but this can be done as in Proposition~\ref{prop:special_s3}.
Observe that using the formula from \cite{Kegel2016} also allows us to calculate
the Thurston--Bennequin invariant, which is an alternative to the method presented
in \cite{tb_openbooks}.
Similarly, one can directly calculate the
Poincar\'{e}-dual of the Euler class and the $d_3$-invariant of the contact structure
(see \cite[Theorem~5.1]{rot_surgery}).

Thus, we have proved Theorem~\ref{thm:rot_in_ob}.
\end{proof}

\section{Algorithm and examples}
\label{section:algorithm}

We summarise the process and all required formulas in the following algorithm and
illustrate them by giving examples. This section is meant as a self-contained
guideline to do actual computations and can be used independently from the rest of the paper.

\begin{algorithm}
\label{algo_everything}
\emph{The setting.}\\
Given is a non-isolating curve $K$ on the page of an open book
$$
(\Sigma_{g,h+1}, \;\phi = T_l^{\pm n_l} \circ\cdots\circ T_1^{\pm n_1})
$$
with $n_i\in \N$ and $\Sigma_{g,h+1}$ a surface of genus $g$ with $h+1$
boundary components. The monodromy is given as a sequence of Dehn twists along
non-isolating oriented curves~$T_i$.

\vspace{0.5em}
\noindent
\emph{The choices.}\\
Choose reducing arcs $r_1,\ldots, r_{g+h-1}$ such that
when cutting along~$r_i$
\begin{itemize}
\item
$\Sigma$ decomposes into a surface $\Sigma_i$ of genus $i$
with one boundary component
containing $r_1,\ldots, r_{i-1}$
and a surface of genus $g-i$ with $h+1$ boundary components
for $i=1,\ldots,g$,
\item
$\Sigma$ decomposes into a surface $\Sigma_i$ of genus $g$ with $i+1$ boundary components
containing $r_1,\ldots, r_{i-1}$
and a disk with $h-i$ holes
for $i=g+1,\ldots,g+h-1$.
\end{itemize}
Then choose an arc basis of $\Sigma_i\setminus\Sigma_{i-1}$ and label it by $a_i, b_i$
and orient it
such that when travelling along the oriented boundary of $\Sigma$ from 
\begin{itemize}
\item $r_1$ to $r_1$
\begin{itemize}
\item first $a_1$ is met pointing outwards, then $b_1$ is met pointing inwards
if $g\geq 1$
\item
$b_1$ is met and pointing outwards if $g=0$
\end{itemize}
\item $r_{i-1}$ to $r_i$ only $b_i$ is met and pointing outwards ($i=2,\ldots,g+h-2$)
\item
$r_{g+h-1}$ to $r_{g+h-1}$
\begin{itemize}
\item first $b_g$ is met pointing outwards, then $a_g$ is met pointing outwards
if $h=0$
\item $b_{g+h}$ is met and pointing outwards if $h>0$.
\end{itemize}
\end{itemize}
Choose non-trivial oriented simple closed curves
$\alpha_i,\beta_i$ representing a basis of $H_1(\Sigma)$ dual to the arcs
with respect to the intersection product on $\Sigma$
oriented such that $\alpha_i\bullet a_i = 1$, $\beta_j\bullet b_j=1$ and
$\alpha_i\bullet\beta_i = 1$
(i.e.\ the situation is as in Figure~\ref{fig:AbstractOpenBookWithBoundary}).

\vspace{0.5em}
\noindent
\emph{The definitions.}\\
Define an integral vector $\mathbf{l}\in \Z^{2g+h+l}$ with entries:
$$
\arraycolsep=5pt\def\arraystretch{1.5}
\begin{array}{rl}
%\multicolumn{2}{l}{
%\textsl{for }j=1,\ldots,g+h:}\\
\mathbf{l}_j = &
- (K\bullet b_j),\\
&\textsl{for }j=1,\ldots,g+h,\\
%
%\multicolumn{2}{l}{
%\textsl{for }j=1,\ldots,g:}\\
\mathbf{l}_{g+h+j} = & - \big( (K\bullet a_j) + (K\bullet b_j) \big),\\
&\textsl{for }j=1,\ldots,g,\\
%
%\multicolumn{2}{l}{
%\textsl{for }j=1,\ldots,l:}\\
\mathbf{l}_{2g+h+j}
=&
-\displaystyle\sum_{k=1}^{g+h}\big( (K\bullet a_k)(T_j\bullet a_k)
+ (K\bullet b_k)(T_j\bullet a_k)\\
&
+ (K\bullet b_k)(T_j\bullet b_k) \big),\\
&\textsl{for }j=1,\ldots,l,\\
\end{array}
$$
Define an integral $({2g+h+l})\times({2g+h+l})$-matrix $Q$ with entries:
$$
\arraycolsep=5pt\def\arraystretch{1.5}
\begin{array}{rl}
Q_{i,j} =& 0,\\
&\textsl{for } i,j=1,\ldots,2g+h,\\
%
%Q_{i,i+g+h} =& -1 = Q_{j,j-g-h},\\
%&\textsl{for }i=1,\ldots,g,\; j=g+h+1,\ldots,2g+h,\\
%
Q_{2g+h+i,2g+h+i} = &\mp 1 - n_i \displaystyle\sum_{k=1}^{g+h}\big( (T_i\bullet a_k)^2 + (T_i \bullet a_k)(T_i\bullet b_k)
+ (T_i\bullet b_k)^2 \big),\\
&\textsl{for }i=1,\ldots,l,\\
Q_{2g+h+i,j}= & - (T_i\bullet b_j),\\
&\textsl{for }i=1,\ldots,l,\; j=1,\ldots,g+h,\\
Q_{2g+h+i,j} = &
- \big( (T_i\bullet a_j) + (T_i\bullet b_j) \big),\\
&\textsl{for }i=1,\ldots,l,\; j=g+h+1,\ldots,2g+h,
\end{array}
$$
$$
\arraycolsep=5pt\def\arraystretch{1.5}
\begin{array}{rl}
Q_{i,2g+h+j} = &
- n_j(T_j\bullet b_i),\\
&\textsl{for }i=1,\ldots,g+h,\; j=1,\ldots,l,\\
Q_{g+h+i,2g+h+j} = &
- n_j\big( (T_j\bullet a_i) + (T_j\bullet b_i) \big),\\
&\textsl{for }i=1,\ldots,g,\; j=1,\ldots,l,\\
Q_{2g+h+i,2g+h+j} = &
-n_j\displaystyle\sum_{k=1}^{g+h}\big( (T_i\bullet a_k)(T_j\bullet a_k)
+ (T_i \bullet a_k)(T_j\bullet b_k)\\
&
+ (T_i\bullet b_k)(T_j\bullet b_k) \big),\textsl{for }i<j, \; i,j=1,\ldots,l,\\
Q_{2g+h+i,2g+h+j} = &
-n_i\displaystyle\sum_{k=1}^{g+h}\big( (T_i\bullet a_k)(T_j\bullet a_k)
+ (T_i \bullet b_k)(T_j\bullet a_k)\\
&
+ (T_i\bullet b_k)(T_j\bullet b_k) \big),\textsl{for }i>j, \; i,j=1,\ldots,l.
\end{array}
$$
For an oriented non-isolating curve $L$ we define the quantity $r(L)$ as follows:
choose a starting point on $L$ and write $L$ as a word in the $\alpha_i$ and $\beta_i$
by noting intersections with $a_i$ and $b_i$ when traversing along $L$.
Set $\lambda_+$ to be the number of times a $\beta^{-1}$ is followed by an
$\alpha^{-1}$ of the same index also considering the step from the last to the first letter,
and similarly, set $\rho_+$ equal to the number of times an $\alpha^{-1}$ is
followed by a $\beta^{-1}$ of the same index.
Denote places where the index changes by $r_u$ ($r_d$) if the index
increases (decreases) -- including the last position if the index of the last
letter is not equal to the index of the first letter.
Now run through the index changes and increase $\lambda_+$ and $\rho_+$
according to the following rule:

\begin{itemize}
\item increase $\lambda_+$ by $1$ for
\begin{itemize}
\item a $\beta^{-1}$ followed by $r_u$
\item $r_d$ followed by an $\alpha^{-1}$
\end{itemize}
\item increase $\rho_+$ by $1$ for
\begin{itemize}
\item an $\alpha^{-1}$ followed by $r_d$
\item a $\beta$ followed by $r_d$.
\end{itemize}
\end{itemize}
Then define
$$
r(L) := \rho_+ - \lambda_+.
$$

\vspace{0.5em}
\noindent
\emph{The results.}\\
Then the following holds:
\begin{itemize}
\item[(a)]
$K$ is nullhomologous if and only if
there is an integral solution
$\mathbf{a}$ of the equation $\mathbf{l} = Q\mathbf{a}$.
\item[(a')]
$K$ is rationally nullhomologous in the manifold if and only if
there is a rational solution
$\mathbf{a}$ of the equation $\mathbf{l} = Q\mathbf{a}$.
\item[(b1)]
If $K$ is (rationally) nullhomologous, the (rational) Thurston--Bennequin invariant of
$K$ is
\begin{align*}
\tb(K) = &
-\sum_{k=1}^{g+h}\big( (K\bullet a_k)^2 + (K \bullet a_k)(K\bullet b_k)
+ (K\bullet b_k)^2 \big) \\
& - \sum_{j=1}^{2g+h}{a_j l_{j}}
 - \sum_{j=1}^{l}{a_{2g+h+j} n_j l_{2g+h+j}}.
\end{align*}
\item[(b2)]
If $K$ is (rationally) nullhomologous, the (rational) rotation number with respect to
some special Seifert surface $S$ of $K$ is
$$
\rot (K, S) = r(K)- \sum_{j=1}^{l}{a_{2g+h+j} n_j r(T_j)}.
$$
\item[(b3)]
Denote by $K^\pm$ the positive (resp. negative) transverse push-off of a (rationally) nullhomologous Legendrian $K$.
Then its (rational) self-linking number with respect to the Seifert surface $S$
from (b2) is
$$
\selfl(K^\pm, S) = \tb(K) \mp \rot(K,S).
$$
\item[(c)]
The Poincar\'{e}-dual of the Euler class is given by
\begin{equation*}
\operatorname{PD}\big(\textrm{e}(\xi)\big)=\sum_{i=1}^l n_i r(T_i)\mu_{T_i}\in H_1(M).
\end{equation*}
The first homology group
$H_1(M)$ of $M$ is generated by the meridians $\mu$
of the $\alpha_i, \beta_i$ and $T_i$
and the relations are given by the generalized linking matrix $Q\mathbf\mu=0$.

\item[(d)]
The Euler class $\textrm{e}(\xi)$ is torsion if and only if there exists
a rational solution $\mathbf b$ of $Q\mathbf b=\mathbf{r}$
with $\mathbf{r}_i = 0$ for $i=1,\ldots, 2g+h$ and
$\mathbf{r}_{2g+h+i} = r(T_i)$ for $i=1,\ldots, l$.
In this case, the $d_3$-invariant of $\xi$ computes as
\begin{equation*}
d_3(\xi) =
g + \frac{h}{2} +
\frac{1}{4} \left(\sum_{i=1}^l n_i b_{2g+h+i} r(T_i)  - (3-n_i) \operatorname{sign}_i\right)   - \frac{3}{4} \sigma (Q) - \frac{1}{2} ,
\end{equation*}
where $\operatorname{sign}_i$ denotes the sign of the power of the Dehn twist $T_i^{\pm n_i}$.
\end{itemize}
\end{algorithm}

\begin{remark}[]
In the algorithm above, we implicitly assumed that the knot $K$ sits on the page
with a higher level than the monodromy curves.
As described in Section~\ref{section:general_case}, $K$ could also be assumed to sit on the lowest level, which
would change the formulas defining the vector $\mathbf{l}$.
Note that in general, if $e(\xi)\neq 0$, the resulting rotation number might differ,
as it is computed with respect to a different class of Seifert surface.
However, if the open book is planar or $e(\xi) = 0$, we get the same values for both cases.
\end{remark}

\begin{remark}
\label{rem:planar_easy_formula}
In the planar case, the formulas simplify to
$$
\arraycolsep=5pt\def\arraystretch{1.6}
\begin{array}{rl}
%\multicolumn{2}{l}{
%\textsl{for }j=1,\ldots,g+h:}\\
\mathbf{l}_j = &
- (K\bullet b_j),\;\textsl{for }j=1,\ldots,h,\\
%
%\multicolumn{2}{l}{
%\textsl{for }j=1,\ldots,l:}\\
\mathbf{l}_{h+j}
=&
-\displaystyle\sum_{k=1}^{h}(K\bullet b_k)(T_j\bullet b_k),\;\textsl{for }j=1,\ldots,l,\\
Q_{i,j} =& 0,\;\textsl{for } i,j=1,\ldots,h,
\end{array}
$$
$$
\arraycolsep=5pt\def\arraystretch{1.6}
\begin{array}{rl}
Q_{h+i,h+i} = &\mp 1 - n_i \displaystyle\sum_{k=1}^{h}(T_i\bullet b_k)^2,\;\textsl{for }i=1,\ldots,l,\\
Q_{h+i,j}= & - (T_i\bullet b_j),\;\textsl{for }i=1,\ldots,l,\; j=1,\ldots,h,\\
Q_{i,h+j} = &
- n_j(T_j\bullet b_i),\;\textsl{for }i=1,\ldots,h,\; j=1,\ldots,l,\\
Q_{h+i,h+j} = &
-n_j\displaystyle\sum_{k=1}^{h}(T_i\bullet b_k)(T_j\bullet b_k),\;\textsl{for }i\neq j, \; i,j=1,\ldots,l.
\end{array}
$$
If furthermore all $n_i = 1$, we have that

$$
Q =
\begin{pmatrix}
Q_1 & Q_2 \\
Q_3 & Q_4
\end{pmatrix}
$$
with
$Q_1 = 0_{h\times h}$ the zero ($h\times h$)-matrix,
$$
Q_2 = Q_3^t = -\big( T_j\bullet b_i\big)_{i=1,\ldots,h;\; j=1,\ldots,l}
$$
and 
$$
Q_4 = Q_3 Q_2 \mp \textrm{diag}\big(\sign(T_1),\ldots,\sign(T_l)\big).
$$
\end{remark}

\begin{example}
\label{ex:planar_word}
In this example we want to reconsider the planar open book of $(S^3,\xist)$ discussed in Example~\ref{ex:planar}, where we calculated the rotation number to be $2$
using Proposition~\ref{prop:planar}.
If we choose the arc basis as described above, the knot is encoded by the word
$\beta_2 \beta_4 \beta_3$.
This yields $\lambda_+ = 0$ and $\rho_+ = 2$, i.e.\ $\rot = 2$ as expected.

Note that if we choose a different arc basis, e.g.\ such that the word is
$\beta_2 \beta_3 \beta_4$, then the formula does not give the desired result, as
the knot would be represented by a different word. In fact, the word
$\beta_2 \beta_3 \beta_4$ does not even encode a simple closed curve on the embedded
page.
\end{example}

\begin{figure}[htb] 
\centering
\def\svgwidth{\columnwidth}
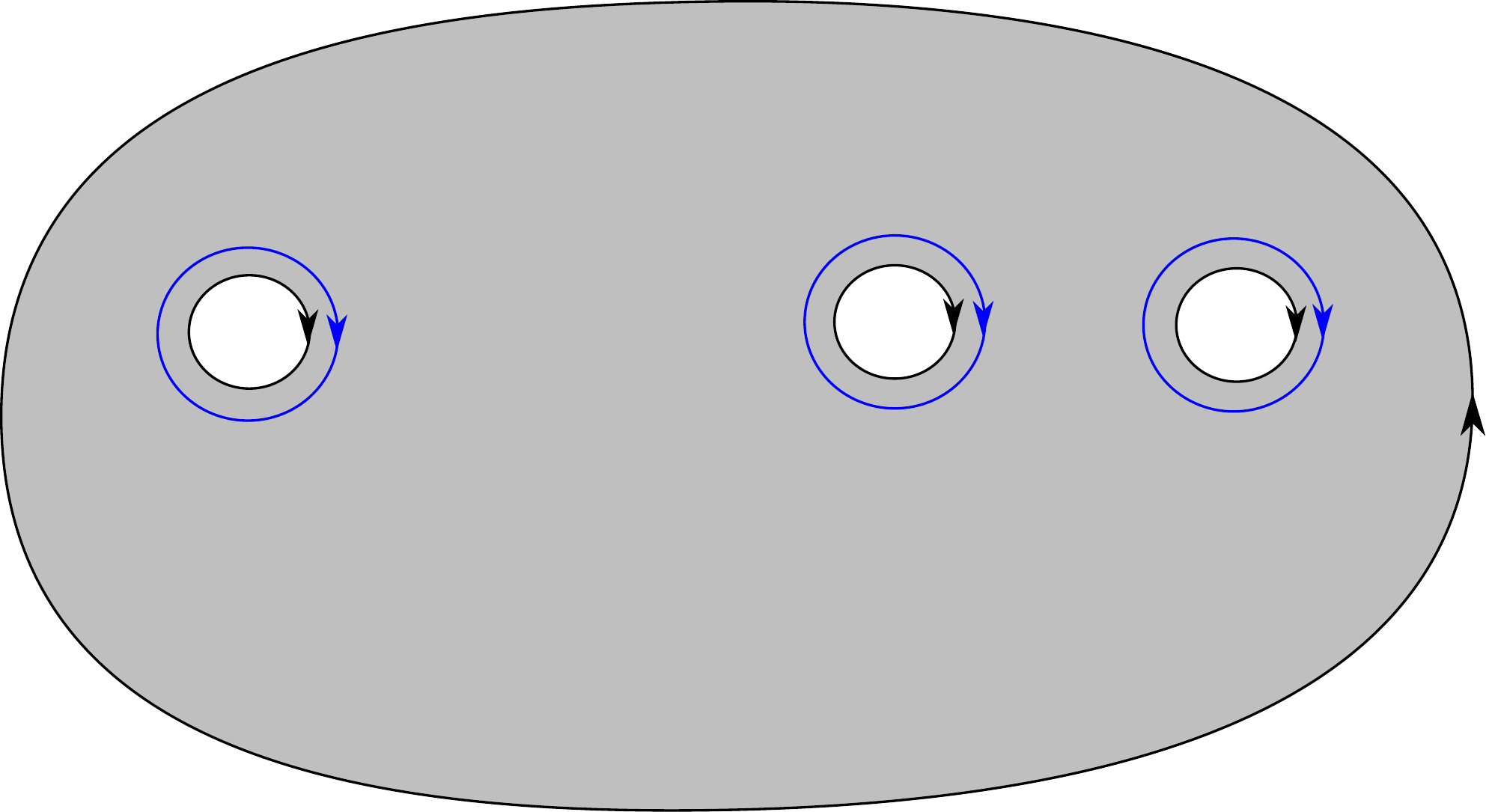
\caption{The open book $(\Sigma, \phi = T_3^{+1}\circ T_2^{+1}\circ T_1^{+1})$ of $(S^3,\xist)$.}
\label{fig:PlanarEt}
\end{figure}

\begin{example}
Consider the open book $(\Sigma, \phi = T_3^{+1}\circ T_2^{+1}\circ T_1^{+1})$
and knot $L$ as specified in Figure~\ref{fig:PlanarEt}.
This is an example of a non-destabilisable planar open book of $(S^3,\xist)$
taken from~\cite{EtLi15}.

By the formulas to compute $\rot$ in the special planar case,
it follows directly that
$$
\mathbf{r} = (0, 0, 0, 0, 2, 1, 1, 0)^t
$$
and $r(L) = 1$.

Using the simplified formulas for planar open books given in
Remark~\ref{rem:planar_easy_formula},
we obtain
$$
\mathbf{l} = (0, -1, 0, -1, -1, 0, -1, -1)^t
$$
and
$$
Q_2 =
-\begin{pmatrix}
1 & 1 & 1 & 0\\
1 & 0 & 1 & 0\\
1 & 1 & 0 & 0\\
0 & 0 & 0 & 1
\end{pmatrix}.
$$
As the manifold is $S^3$, it follows that $Q$ is invertible and thus the equation $
\mathbf{l} = Q\mathbf{a}$ admits a unique solution, which is easily computed to be
$$
\mathbf{a} = (2, -2, -1, -1, 1, -1, 0, 1)^t
$$
(in particular, the calculation shows that $L$ is nullhomologous).
The Thurston--Bennequin invariant of $L$ then computes to be
$$
\tb (L) = -\sum_{k=1}^{4}(L\bullet b_k)^2 - \langle \mathbf{a}, \mathbf{l}\rangle
= -3
$$
and the rotation number is
$$
\rot (L) = r(L) - \langle \mathbf{a}, \mathbf{r}\rangle = 0.
$$
The self-linking number of both the positive and the negative transverse push-off
of $L$ is $-3$.

Since $Q$ is invertible, we have $H_1 = 0$, i.e.\ the Poincar\'e dual to the Euler class of the contact structure $\xi$ vanishes.
As expected, our formula then returns
$$
d_3 (\xi) = -\frac{1}{2}.
$$
\end{example}

\section*{Acknowledgements}
The results in this paper are also contained in the thesis of the first author~\cite{KoelschDurst}. We would like to thank our advisor Hansj\"org Geiges and our former colleagues Mirko Klukas and Thomas Rot for useful discussions. Moreover, we would like to thank Youlin Li for pointing out a mistake in Algorithm~\ref{algo_everything} and Example~\ref{ex:nonempty} in an earlier version.

%%%%%%%%%%%%%%%%%%%%%%%%%%%%%%%%%%%%%%%%%%%%%%%%%%%%%%%%%%%%%%%%%%%%%%%%%%%%%%%%

%%%%%%%%%%%%%%%%%%%%%%%%%%%%%%%%%%%%%%%%%%%%%%%%%%%%%%%%%%%%%%%%%%%%%%%%%%%%%%%%

\end{document}